\newif\ifElsevier

\Elsevierfalse 	

\ifElsevier
	\documentclass{elsarticle}
\else
	\documentclass[12pt]{article}
\fi

\usepackage[margin=1in]{geometry}
\usepackage{amsmath,amssymb,amsthm,graphicx}
\usepackage{subcaption}
\usepackage{enumitem}
\usepackage{todonotes}
\usepackage{multirow}
\usepackage{mathtools}
\usepackage{hyperref}
\usepackage[ruled,linesnumbered]{algorithm2e}
\usepackage{mathrsfs}
\usepackage{tikz}
\usetikzlibrary{decorations.pathreplacing}
\usetikzlibrary{arrows.meta}
\usepackage{pgfplots,pgfplotstable}
\usepackage[multiple]{footmisc}
\usepackage[T1]{fontenc}
\usepackage[utf8]{inputenc}
\usepackage{url}
\usepackage{breakurl}

\newtheorem{theorem}{Theorem}

\newtheorem{lemma}[theorem]{Lemma}
\newtheorem{corollary}[theorem]{Corollary}

\newtheorem{proposition}[theorem]{Proposition}

\newtheorem{observation}[theorem]{Observation}
\newtheorem{remark}[theorem]{Remark}

\DeclarePairedDelimiter{\card}{\lvert}{\rvert}

\newcommand{\Z}{\mathbb{Z}}

\newcommand{\R}{\mathbb{R}}

\newcommand{\psd}{\succcurlyeq 0}
\newcommand{\unaryminus}{\scalebox{0.7}[1.0]{\( - \)}}

\newcommand{\Sy}{{\mathcal{S}_n}}
\newcommand{\met}{{\textup{MET}}}
\newcommand{\hs}{\{\unaryminus 1,1\}^{n}}
\newcommand{\hsp}{\{\unaryminus 1,1\}^{n+1}}

\newcommand{\Las}{{\textup{Las}}}
\newcommand{\GW}{{\textup{GW}}}
\newcommand{\CLI}{{\textup{CLI}}}
\newcommand{\expedis}{{\texttt{EXPEDIS}}}
\newcommand{\zmc}{{z_{\mathrm{max-cut}}}}
\newcommand{\bolddot}{\boldsymbol{\cdot}}

\newcommand{\rew}{\textcolor{black}} 
\newcommand{\reww}{\color{black}}
\newcommand{\rewt}{\textcolor{black}} 

\newcommand{\specificthanks}[1]{\@fnsymbol{#1}}

\DeclareMathOperator{\tr}{tr}				
\DeclareMathOperator{\Diag}{Diag}			
\DeclareMathOperator{\diag}{diag}			
\DeclareMathOperator{\rank}{rk}				
\DeclareMathOperator{\argmin}{argmin}		        
\DeclareMathOperator{\ns}{null}				

\begin{document}

\ifElsevier
	\title{\expedis: An Exact Penalty Method over Discrete Sets}
\else
	\title{\expedis: An Exact Penalty Method over Discrete Sets}
\fi

\ifElsevier
	\author[1,2]{Nicol{\`o} Gusmeroli}
	\ead{nicolo.gusmeroli@aau.at}
	\author[1]{Angelika Wiegele}
	\ead{angelika.wiegele@aau.at}
	
	\address[1]{Alpen-Adria-Universit{\"a}t Klagenfurt, Universit\"atsstra{\ss}e 65-67, 9020 Klagenfurt, Austria}
	\address[2]{TU Dortmund, Vogelpothsweg 87, 44227 Dortmund, Germany}
\else	

\author{Nicol{\`o} Gusmeroli\thanks{Alpen-Adria-Universit{\"a}t Klagenfurt, Universit\"atsstra{\ss}e 65-67, 9020 Klagenfurt, Austria. 
\newline Emails: \{nicolo.gusmeroli,angelika.wiegele\}@aau.at} \thanks{\rew{Part of this work has been done
        while the first author was employed at TU Dortmund.}}
\and{Angelika Wiegele}\footnotemark[1]}

\fi

\ifElsevier
\else
  \maketitle
\fi

\begin{abstract}
  We address the problem of minimizing a quadratic function subject to
  linear constraints over binary variables. We introduce the exact solution
  method called \expedis\; where the constrained problem is transformed into a max-cut
  instance, and then the whole machinery available for max-cut can be
  used to solve the transformed problem.
  We derive the theory in order to find a transformation in the spirit
  of an exact penalty method; however, we are only interested in
  exactness over the set of binary variables.
  In order to compute the maximum cut we use the solver BiqMac.
  Numerical results show that
  this algorithm can be successfully applied on various classes of
  problems.  
\end{abstract}
\ifElsevier
  \maketitle
\fi

\section{Introduction}\label{sec:INT}
	
We address the problem of solving linearly constrained binary quadratic
problems of the following form:
\begin{equation}\label{prob:BQP01}\tag{{BQP$_{01}$}}
f^* = \min \big\{ \hat{f}(y) = y^\top \hat{F}y + \hat{c}^\top
y \mid \hat{A}y = \hat{b}, \ y \in \{0,1\}^n \big\},
\end{equation}
where $\hat{F} \in \R^{n \times n}$ is a symmetric matrix,
$\hat{c} \in \R^n$, and the linear equations are given via  
$\hat{A} \in \Z^{m \times n}$ and $\hat{b} \in \Z^m$.

Problem~\eqref{prob:BQP01} encompasses  0/1 linear programming
problems and unconstrained quadratic 0/1 problems, which are both
known to be  classes of NP-hard problems. Several well-known NP-hard
problems from combinatorial optimization, like max-cut, stable set,
graph partitioning, graph coloring, routing problems, knapsack
problems etc. are explicit instances of these two classes, see, 
e.g.,~\cite{GaJoh:79,NeWo:88,Wa:14} 
for definitions and proofs. 

All these problems have a wide range of applications and there is big
interest in solution methods for \eqref{prob:BQP01} also outside the
mathematical optimization area. 
Data science (clustering analysis),  logistics (quadratic assignment
problem, vehicle routing problem), telecommunications (several
versions of frequency assignment problem), finance (portfolio
optimization problem), etc. are some of the areas where solving the
underlying linear or quadratic 0/1 problems is essential, see, e.g.,
the survey papers~\cite{GeGu:2011,KoHaLe:2014}. 

Solving Problem~\eqref{prob:BQP01} to optimality is always highly
appreciated. Even when good solutions based on appropriate (meta)heuristics 
are acceptable for practical needs, the developers of such
algorithms still need to evaluate them and this can be done only if
optimal solutions on problems of (at least) medium size are
available. 

Global optimization solvers that can handle problems of this type are
typically branch-and-bound algorithms. One can group them according to
the 
different types of relaxations used in order to obtain lower
bounds. Among them are relaxations based on
reformulation-linearization techniques (RLT)~\cite{ShAd:90} but also
relaxations based on semidefinite programming (SDP) have been
successfully implemented~\cite{PoReWo:95, BuVa:08, BuWi:13}.

In this paper we follow the idea introduced by J.~B.~Lasserre
in~\cite{La16}. We will reformulate Problem~\eqref{prob:BQP01} as a
max-cut problem, which we then solve using the solver BiqMac developed
by Rendl, Rinaldi and Wiegele~\cite{ReRiWi10}.
The crucial part is to find a penalty parameter used in the
transformation large enough to get 
equality of the two problems but at the same time to be kept small in
order to not run into numerical difficulties.

The max-cut problem is a well-studied combinatorial optimization
problem. Hence, the whole machinery developed for max-cut can be
used in order to solve the underlying problem.
Transformation to max-cut is also beneficial since quantum annealers
like D-Wave systems ask for max-cut problems as input type.

We introduce algorithm \expedis\ that computes a penalty parameter and
solves the transformed problem using the solver BiqMac.
We derive the theory on what is necessary to get such a transformation
to a max-cut instance. In particular, we state conditions on a minimal
penalty parameter. Several variants of
how the parameter can be computed as well as refinements with respect
to infeasibility or known feasible solutions are presented.
Numerical results demonstrate that this procedure works well on 
\rew{randomly generated instances}
instances as well as on several classes of instances from the
literature, like the max $k$-cluster problem.

The remainder of this paper is structured as follows: in
Section~\ref{sec:PRE} we briefly describe a well-known procedure to
derive relaxations based on semidefinite programming (SDP) and we give
a formulation of the max-cut problem together with a short explanation
of the exact solution method BiqMac; in Section~\ref{sec:EXPA} we begin the
heart of this paper -- we describe Algorithm~\expedis, an exact penalty
method over discrete sets; in
Section~\ref{sec:LAS} we show that \expedis\ is a generalization of the
method introduced by Lasserre~\cite{La16}; Section~\ref{sec:PARAM}
states the necessary conditions on how to choose the parameters used
in \expedis\ while
in Section~\ref{sec:IMP} we give recipes on computing them;
in Section~\ref{sec:REF} refinements of the algorithm are discussed
before we present our numerical results in Section~\ref{sec:RES};
Section~\ref{sec:CON} concludes this paper giving a summary and an
outlook on future research. 

	\paragraph{Notation} 
        We denote by $e$ the vector of all ones, by $J$ the matrix of all ones, and by $e_j$ the unit vector with value~1 in the $j$-th component and~0 everywhere else.
        The 0-vector and the 0-matrix is denoted by $\mathbf{0}$. 
        Given a matrix $A$, $A_{i, \bolddot }$ is row $i$ of $A$ and $A_{\bolddot, j}$ is column $j$ of $A$.
        Matrix $A \in \R^{n \times n}$ is positive semidefinite, denoted $A \psd $, if $x^\top Ax \geq 0$ for every $x \in \R^n$ and 
        for $A,B \in \R^{n \times n}$, we define the inner product $\langle A,B \rangle = \tr\left(B^\top A\right) = \sum_i \sum_j A_{ij} B_{ij}$.
        The vector holding the diagonal elements of a matrix $X \in \R^{n \times n}$ is denoted by $\diag(X)$, i.e., $\diag(X)_i = X_{ii}$, and 
        given a vector $x \in \R^n$, by $\Diag(x)$ we denote the $n \times n $ diagonal matrix $X$ such that its diagonal vector is $x$.

\section{Preliminaries}\label{sec:PRE}

        \subsection{Solving Max-Cut Problems}\label{sec:biqmac}
        The max-cut problem is among the most studied combinatorial optimization problems. It has connections to various fields of discrete mathematics and has a wide range of applications. 
        Max-cut is an NP-hard problem, but several approximation algorithms as well as exact methods using some kind of branch-and-bound type methods exist, see, e.g., \cite{ReRiWi10} for more details and references.
        As several other combinatorial optimization problems, max-cut problems are easy to state. Let be given an undirected graph $G=(V,E)$, having vertex set $V$ and edge set $E$ with weights $w_e\in \R$ on the edges $e\in E$.
        The max-cut problem asks to partition the vertex set into two parts $(S, V\backslash S)$ in a way such that the sum of the weights on the edges having exactly one endpoint in $S$ is maximized, i.e., we look for a subset of the edges  
        \begin{equation*}
          \delta(S) = \{ e= uv\in E\mid u \in S, v\not\in S\}
        \end{equation*}
        where $S\subseteq V$, such that $\sum_{e \in \delta(S)} w_e$ is maximized. Let $A=(a_{ij})$ be the adjacency matrix of the graph, i.e., $a_{ij} = w_e$ for $e=\{i,j\}$. The Laplace matrix of the graph associated with $A$ is given as 
        \begin{equation*}
          L = \rew{\Diag}(Ae) - A
        \end{equation*}
        and defines $C=\frac{1}{4}L$.
        Then we can find the maximum cut by solving the binary quadratic problem
        \begin{equation}\label{eq:zmc}
          \zmc = \max \left\{ x^\top Cx \mid x\in \{\unaryminus 1,1\}^{\card{V}} \right\}.
        \end{equation}

        Among the most efficient solvers for computing the maximum cut in a (medium-sized) graph is \emph{BiqMac}~\cite{ReRiWi10}.
        BiqMac uses semidefinite relaxations in order to generate high quality upper bounds on the maximum cut. In particular, the approximate solution of the semidefinite relaxation 
        \begin{equation}\label{eq:sdp-met}
          \max \left\{ \langle C,X \rangle \mid X \psd, \ \diag(X) = e,\ X\in \met \right\}
        \end{equation}
        serves as an upper bound in a branch-and-bound scheme (see Section~\ref{sec:sdp}).

        In order to derive a lower bound (finding a cut in the
        graph with a large value), the Goemans-Williamson hyperplane
        rounding technique~\cite{GoWi95} is applied to the matrix
        obtained by solving the SDP~\eqref{eq:sdp-met}. All details
        about the BiqMac algorithm can be found in~\cite{ReRiWi10}.

	\subsection{Semidefinite Relaxations of Binary Problems}\label{sec:sdp}

	There is a well-known procedure on how to derive semidefinite
        relaxations for the $\pm 1$ version of
        problem~\eqref{prob:BQP01}. (See Problem~\eqref{prob:BQP} in
        Section~\ref{sec:EXPA} for an explicit formulation in the $\pm
        1$ setting.)
        The following equivalence is easy to prove.
	\begin{equation}\label{eq:rank1formulation}
	\big\{xx^\top: x \in \hs \big\} =  \{ X \in \Sy \mid X \psd, ~\diag(X)=e, ~\rank(X)=1\}
	\end{equation} 
	Thus, problem~\eqref{prob:BQP} has an equivalent formulation as
	\begin{equation*}
	f^*= \min \left\{ \langle F,X \rangle + c^\top x + \alpha \mid Ax = b, \ \begin{bmatrix*} 1 & x^\top \\ x & X \end{bmatrix*} \psd, \ \diag(X) = e, \ \rank(X) = 1 \right\}.
	\end{equation*} 
        In this formulation, all non-convexity (from the objective function as well as from the binary conditions) is hidden in the rank-1 constraint. 
        Hence, it is straightforward to derive a semidefinite relaxation by dropping the rank-condition, 
	\begin{equation}\label{prob:SDP}
	\min \left\{ \langle F,X \rangle + c^\top x + \alpha \mid Ax = b, \ \begin{bmatrix*} 1 & x^\top \\ x & X \end{bmatrix*} \psd, \ \diag(X) = e \right\}.
	\end{equation} 
	In the absence of linear constraints, this is the \emph{Shor relaxation}~\cite{Sh87}. It can be \rew{solved} in polynomial time using, e.g., interior point methods. 
        For a more detailed study about semidefinite programming, we refer the reader to the handbooks~\cite{handbook-2012, handbook-2000} and the references therein.

        \paragraph{Adding cutting planes.}
        Relaxation~\eqref{prob:SDP} can be tightened by adding polyhedral cuts.
        In particular, {\em clique inequalities}~\cite{LaPo96}
        turn out to strengthen relaxation~\eqref{prob:SDP} significantly. 

        Consider the vector $b$  with entries from the set
        $\{\unaryminus 1,0,1\}^n$ and an odd number of nonzero
        entries. Then
        \begin{equation*}
          \min \left\{ (b^\top x)^2 \mid x \in \hs \right\} = 1,
        \end{equation*}
        hence the inequality $b^\top Xb \ge 1$ is valid for
        Problem~\eqref{prob:BQP} and can be used to tighten 
        relaxation~\eqref{prob:SDP}. 

        When the vector $b$ consists of three nonzero entries, the
        arising clique inequalities are the so called \emph{triangle
          inequalities}, i.e., the set of constraints 
        \begin{align*}
          x_{ij} + x_{ik} + x_{jk} &\ge -1\\
          x_{ij} - x_{ik} - x_{jk} &\ge -1\\
         -x_{ij} + x_{ik} - x_{jk} &\ge -1\\
         -x_{ij} - x_{ik} + x_{jk} &\ge -1
        \end{align*}
        for all $1\le i < j < k\le n$. The polytope containing all $X$ that
        satisfy these triangle inequalities is called the metric
        polytope and is denoted by $\met$.
        Adding these constraints tightens the SDP relaxation
        significantly. However, solving this strengthened SDP comes
        with a serious computational effort.  
        In~\cite{FiGrReSo06, ReRiWi10} a method to deal with such an
        SDP with a huge number of linear constraints has been
        developed. A (dynamic version) of a bundle method 
        is used in
        order to obtain an approximate solution, giving a safe upper
        bound on the maximum cut of the graph. 

        In case all triangle inequalities are satisfied, we
        can achieve 
        a further strengthening by considering vectors
        $b$ with five nonzero entries, leading to \emph{5-clique inequalities}.
        Differently from the triangle inequalities, the 5-clique inequalities are
        too many
        to be enumerated hence we use a heuristic to separate
        them.
        
        In this separation algorithm we create a set of random permutations of five elements. 
        Then we run a minimization problem over this permutation for a finite number of swaps.
        The swaps are accepted if the solution improves, and they are
        accepted with a certain probability if the solution does not
        improve. 
       	This procedure creates a set of 5-clique inequalities with a
        potentially high violation. 
        From this set, we add the most violated 5-clique inequalities
        to the relaxation.

	\section{Exact Penalty Method over Discrete Sets}\label{sec:EXPA}
	
	Given a symmetric matrix $\hat{F} \in \R^{n \times n}$ and a
        vector $\hat{c} \in \R^n$, we define the objective function
        $\hat{f}(y) = y^\top \hat{F}y + \hat{c}^\top
        y$. Moreover, we consider the linear equations 
        $\hat{A}y = \hat{b}$, where $\hat{A} \in \Z^{m \times n}$ and
        $\hat{b} \in \Z^m$. We want to find  
	\begin{equation}\label{prob:BQP01b}\tag{{BQP$_{01}$}}
	f^* = \min \big\{ \hat{f}(y) \mid \hat{A}y = \hat{b}, \ y \in \{0,1\}^n \big\},
	\end{equation}
        i.e., we want to solve a linearly constrained binary quadratic problem.
	
	We can reformulate problem~\eqref{prob:BQP01} to an equivalent
        formulation with variables in $\{\unaryminus 1,1\}$. Consider the change of
        variables $x = 2y -e$, and let $A = \frac{1}{2}\hat{A}$, $b
        = \hat{b} - \frac{1}{2}\hat{A}e$, $c =
        \frac{1}{2}(\hat{c}+\hat{F}e)$ and $F = \frac{1}{4}\hat{F}$ be the new
        parameters. Then 
	\begin{equation}\label{prob:BQP}\tag{BQP}
	f^* = \min \big\{ f(x) \mid A x = b, \ x \in \hs \big\}
	\end{equation}
	where $f(x) = x^\top F x + c^\top x + \alpha$ and
        $\alpha = \frac{1}{2} \hat{c}^\top e + \frac{1}{4}e^\top \hat{F} e
        $. In case problem~\eqref{prob:BQP} is infeasible, we have $f^*
        = + \infty$.  
        \begin{remark}\label{rem:axminusb}
        Note that since $\hat{A}$ and $\hat{b}$ are integer valued,
        for $y\in \{0,1\}^n$ the value of $\hat{A}y-\hat{b}$ is an
        integer as well. The transformation ensures $\hat{A}y - \hat{b} = Ax-b$. Therefore, for $x \in \hs$, the value of
        $Ax-b$ must also be integral, even though the values in $A$
        and $b$ might be fractional.  
        \end{remark}
	
	In order to simplify notation, we denote by $\Delta$ the set of feasible points of Problem~\eqref{prob:BQP}, and
        by $\Delta^c$ we denote the set of infeasible $\hs$ vectors, i.e., 
        \begin{align*}
          \Delta &= \big\{ x \in \hs \mid Ax = b \big\}\\
          \Delta^c &= \big\{ x \in \hs \mid Ax \not= b \big\}.
        \end{align*}
	We now introduce a penalty parameter $\sigma > 0$ and add a quadratic penalty function to $f(x)$, thus we have the function
	\begin{equation}
	h(x) = f(x) + \sigma \lVert Ax-b \rVert^2
	\end{equation} 
	and we consider the unconstrained binary quadratic problem
	\begin{equation}\label{prob:UBQP}\tag{UBQP}
	h^* = \min \big\{ h(x) \mid x \in \hs \big\}.
	\end{equation}
	Expanding terms, we can rewrite the objective function of Problem~\eqref{prob:UBQP}
	\begin{align*}
	h(x) &= f(x) + \sigma \lVert Ax-b \rVert^2 =\\
	&= x^\top F x +c^\top x + \alpha + \sigma (Ax-b)^\top (Ax-b) =\\
	&= x^\top (F + \sigma A^\top A)x + (c - 2\sigma A^\top b)^\top x + (\alpha + \sigma b^\top b) = \\
	&= \bar{x}^{\top} Q \bar{x}
	\end{align*}
	where $\bar{x} = \begin{bmatrix} 1\\ x\end{bmatrix}$ and 
        \begin{equation}\label{eq:Q}
          Q=\begin{bmatrix} \alpha + \sigma b^\top b & (c - 2\sigma A^\top b)^\top/2 \\ (c - 2\sigma A^\top b)/2 & F + \sigma A^\top A \end{bmatrix}.
        \end{equation}
	In this way we can restate Problem~\eqref{prob:UBQP} as
	\begin{equation}\label{prob:MC} \tag{MC}
	h^* = \min \big\{ \bar{x}^{\top} Q\bar{x}\mid \bar{x}\in \hsp, \bar{x}_0=1\big\} 
	\end{equation}
	which is a max-cut problem on a graph with $n+1$ vertices. To see this, we define $C = \rew{\Diag}(Q e) - Q$, which gives
	\begin{align*}
	  h^* &= \min \big\{ \bar{x}^{\top} Q \bar{x} \mid \bar{x} \in \hsp\big\} \\
          &= \unaryminus \max \big\{ \bar{x}^\top (-Q)\bar{x} \mid \bar{x} \in \hsp\big\}\\
          &=  e^\top Q e - \max \big\{ \bar{x}^\top C \bar{x} \mid \bar{x} \in \hsp\big\}
	\end{align*}
	where $C = \frac{1}{4}L$ and $L$ is the Laplace matrix of a graph with vertex set
        $\{0,1,\dots,n\}$ and adjacency matrix $\mathcal{A}$ with

	\begin{equation}\label{eq:adjacency}
          \mathcal{A}_{ij} = \left\{ \begin{array}{ll}
	0 & \text{if } i = j \\
	2c_{j} - 4\sigma \left(A_{\bolddot,j}\right)^\top b & \text{if } i=0 \text{ and } i \neq j \\ 
	2c_{i} - 4\sigma \left(A_{\bolddot,i}\right)^\top b & \text{if } j=0 \text{ and } j \neq i \\ 
	4F_{i,j} + 4\sigma  \left(A_{\bolddot,j}\right)^\top
        A_{\bolddot, i} \qquad \quad & \text{if } 1 \le i, j \le n
        \text{ and } i \neq j \end{array} \right.
          \end{equation}

    We now state a theorem that allows us to obtain the solution to
    the constrained problem via an unconstrained one. This theorem
    is the key of the algorithm developed afterwards.
	\begin{theorem}\label{thm:PARAM}
		Consider Problem~\eqref{prob:BQP} and Problem~\eqref{prob:UBQP} with optimal values $f^*$ and $h^*$, respectively. 
                Furthermore, assume we have a threshold parameter $\rho$ and a penalty parameter $\sigma$, satisfying the following conditions:
		\begin{enumerate}
			\item\label{thm:a1} Problem~\eqref{prob:BQP} has no feasible solution with value bigger than the threshold $\rho$;
			\item\label{thm:a2} given any vector $x \in \Delta^c$, the value of the penalized function $h(x) = f(x) + \sigma  \lVert Ax-b \rVert^2$ exceeds the threshold parameter $\rho$.
		\end{enumerate}
                Then, for $f^* < +\infty$, $f^*$ is the optimal value of Problem~\eqref{prob:UBQP}, i.e., $h^* = f^*$. Moreover Problem~\eqref{prob:BQP} has no feasible solution if and only if $h^* > \rho$.
	\end{theorem}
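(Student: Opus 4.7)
The plan is to split the minimization defining $h^*$ according to whether $x$ lies in $\Delta$ or $\Delta^c$, and exploit the fact that the penalty term $\sigma\|Ax-b\|^2$ vanishes exactly on $\Delta$. First I would record the obvious identity: for every $x \in \Delta$, $\|Ax-b\|^2 = 0$, so $h(x) = f(x)$. In particular, minimizing $h$ over $\Delta$ coincides with minimizing $f$ over $\Delta$, yielding $f^*$. The work is therefore to show that no infeasible point beats the feasible minimum, and to transfer the threshold condition into a separation argument.

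Assume first $f^* < +\infty$, so $\Delta \neq \emptyset$. By hypothesis (i), every feasible $x$ satisfies $f(x) \le \rho$, hence $f^* \le \rho$. Together with the identity above, this gives some $x^* \in \Delta$ with $h(x^*) = f^* \le \rho$. On the other hand, by hypothesis (ii), every $x \in \Delta^c$ satisfies $h(x) > \rho \ge f^*$. Since $\hs = \Delta \cup \Delta^c$, we can write
\begin{equation*}
h^* = \min\big\{\min_{x \in \Delta} h(x),\ \min_{x \in \Delta^c} h(x)\big\} = \min\{f^*,\ \min_{x \in \Delta^c} h(x)\} = f^*,
\end{equation*}
which establishes the first claim.

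For the \textit{moreover} part, I would prove the two implications separately. If \eqref{prob:BQP} is infeasible, then $\Delta = \emptyset$ and $\hs = \Delta^c$, so hypothesis (ii) applied to every $x \in \hs$ forces $h^* > \rho$. Conversely, by contraposition, if \eqref{prob:BQP} is feasible then $f^* < +\infty$ and the previous paragraph together with (i) yields $h^* = f^* \le \rho$, i.e.\ $h^* \le \rho$.

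I do not anticipate a real obstacle: the statement is essentially a bookkeeping exercise once (i) and (ii) are in place, and the only subtlety is being careful that condition (i) is used as an upper bound on feasible values (so that some feasible $x^*$ sits below $\rho$), while condition (ii) is used as a strict lower bound on infeasible values (so that no infeasible $x$ can compete with $f^*$). The genuinely hard content of the paper lies elsewhere, namely in showing that such $\rho$ and $\sigma$ can be chosen effectively; here we only have to verify that if they exist, they do the job.
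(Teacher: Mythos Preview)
Your proof is correct and follows essentially the same approach as the paper: split $\hs$ into $\Delta$ and $\Delta^c$, use that $h=f$ on $\Delta$ together with assumption~\ref{thm:a1} to bound feasible values by $\rho$, and use assumption~\ref{thm:a2} to push all infeasible values strictly above $\rho$. The paper additionally invokes Remark~\ref{rem:axminusb} (integrality of $Ax-b$), but as you implicitly recognize, that fact is not actually needed here since assumption~\ref{thm:a2} is stated directly; the integrality only matters later when one wants to \emph{construct} a valid $\sigma$.
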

	\begin{proof}
                From Remark~\ref{rem:axminusb} it follows that $Ax -
                b = \mathbf{0}$ for $x \in \Delta$ and $\lVert Ax-b \rVert^2 \in
                \Z^+$ for $x \in \Delta^c$. 
                Combining this with the assumptions on the parameters $\rho$ and $\sigma$ we have
		\begin{equation}\label{ineq:HRHO}
		h(x) = \begin{cases} f(x) + 0 \le \rho & \qquad \text{ for } x \in \Delta \\ f(x) + \sigma \lVert Ax - b \rVert^2 > \rho & \qquad \text{ for } x \in \Delta^c \end{cases}
		\end{equation}
		We know that $h^*$ is the minimum of Problem~\eqref{prob:UBQP},
                hence we have $h^* > \rho$ if and only if
                $\Delta = \emptyset$, meaning that
                Problem~\eqref{prob:BQP} is infeasible. 
                On the other hand, if
                $\Delta \neq \emptyset$, then the minimizer of
                Problem~\eqref{prob:UBQP} must lie in the set
                $\Delta$ and
                it follows that $h^* = \min \{ h(x) \mid x
                \in \Delta \} = \min \{f(x) + 0 \mid x \in \Delta \}
                = f^*$.  
    \end{proof}
    
    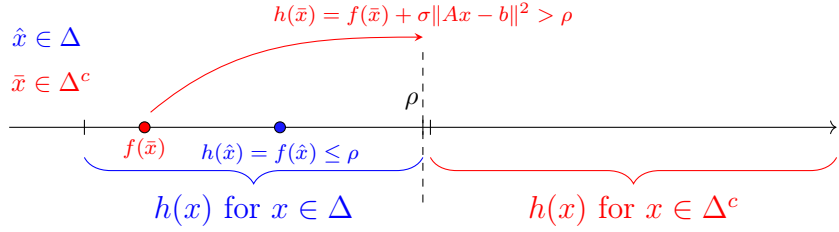
\begin{figure}
    	\begin{center}
    		\begin{tikzpicture}[every edge/.style={shorten <=1pt, shorten >=1pt}]
    		\draw[->] (0,0) to (11,0);
    		\foreach \x/\xtext in {1/,5.5/,5.6/}
    		\draw(\x,0.1)--(\x,-0.1) node[above=5pt] {\xtext};
    		
    		\draw[black,fill=red] (1.8,0) circle (.4ex) node[below,color=red]{\fontsize{8}{12}\selectfont$f(\bar{x})$};
    		\draw[black,fill=red] (5.5,1.8) circle (.0ex) node[below,color=red]{\fontsize{8}{6}\selectfont$h(\bar{x}) = f(\bar{x}) + \sigma \lVert Ax-b \rVert^2 > \rho$};
    		\coordinate (f') at (1.88,0.2);
    		\coordinate (h') at (11,0.1);
    		\coordinate (after) at (5.5,1.2);
    		\draw[->,>=stealth,color=red](f') to [out=40,in=180] node[above]{} (after);
    		
    		\coordinate (l) at (1,-0.4);	
    		\coordinate (u) at (5.5,-0.4);
    		\draw[decorate,decoration={brace,amplitude=10pt,mirror},color=blue] (l) -- (u) node [midway,below=10pt]{$h(x) \text{ for } x \in \Delta$};
    		
    		\draw [dashed] (5.5,-1) -- (5.5,1);	
    		\node at (5.35,0.35) {\footnotesize$\rho$};
    		
    		\node at (0.5,1.2) [color=blue] {\footnotesize$\hat{x} \in \Delta$};
    		\node at (0.56,0.6) [color=red]  {\footnotesize$\bar{x} \in \Delta^c$};
    		
    		\coordinate (l+s) at (5.6,-0.4);
    		\coordinate (fin) at (11,-0.4);
    		\draw[decorate,decoration={brace,amplitude=10pt,mirror},color=red] (l+s) -- (fin) node [midway,below=10pt]{$h(x) \text{ for } x \in \Delta^c$};
    		
    		\draw [black, fill=blue!90!white] (3.6,0) circle (.4ex) node[below=2pt,color=blue]{\fontsize{8}{12}\selectfont$h(\hat{x})=f(\hat{x}) \le \rho$};
    		\end{tikzpicture}
    		 \caption{The values of $h(x)$ for $x \in \Delta$ and $x \in \Delta^c$ are separated by $\rho$.}
    		 \label{fig:thmPARAM}
    	\end{center}
    \end{figure}

    Figure~\ref{fig:thmPARAM} illustrates the role of the parameters
    $\rho$ and $\sigma$:
    if $x \in \Delta$ we have $h(x) = f(x) \le \rho$. On the other
    hand, if $x \in \Delta^c$, adding the penalty term to $f(x)$
    yields $h(x) = f(x) + \sigma \lVert Ax - b \rVert^2 > \rho$.

	Having such a pair of parameters at hand,
        Theorem~\ref{thm:PARAM} allows us to formulate an \emph{exact
          penalty method over discrete sets} which we call
        \expedis\ and outline in Algorithm~\ref{alg:expa}. 
	
	\begin{algorithm}
          \caption{\small Scheme of an exact penalty method over discrete sets} 
          \label{alg:expa} 
                \TitleOfAlgo{\expedis}
		\KwData{$\hat{F} \in \R^{n \times n}, \hat{c} \in \R^n, \hat{A} \in \Z^{m \times n}, \hat{b} \in \Z^m$ defining problem $\min \big\{ y^\top \hat{F}y + \hat{c}^\top y \mid \hat{A}y = \hat{b}, \ y \in \{0,1\}^n \big\}$}
		\KwResult{optimal solution or certificate of infeasibilty}
		\medskip
		transform to problem $\min \big\{ x^\top F x + c^\top x + \alpha \mid Ax = b, \ x \in \hs \big\}$\;
		compute a {\bf threshold parameter} $\rho$\nllabel{rho}\;
		compute a {\bf penalty parameter} $\sigma$\nllabel{sigma}\;
		set up the {\bf max-cut} problem as given in~\eqref{prob:MC}\;
		solve the max-cut problem giving optimal value $h^*$\nllabel{mc}\;
		\eIf{$h^* > \rho $}{
			problem infeasible\;}
		{transform the optimal cut to the optimal solution of the $0/1$ problem\;}
	\end{algorithm}

	Algorithm \expedis\ reformulates a constrained binary quadratic
	problem into a max-cut instance. The solution of the
        max-cut problem either gives a certificate for infeasiblity of
        the original problem or provides the optimal solution.
        Taking a closer look at the computations in \expedis, all
        steps beside Steps~\ref{rho}, \ref{sigma}, and~\ref{mc} are
        straightforward and computationally cheap.

        To perform Step~\ref{mc}, which is solving the max-cut problem,
        we will use the solver BiqMac (see
        Section~\ref{sec:biqmac}).  

        The description on how to perform Steps~\ref{rho}
        and~\ref{sigma} is given in Section~\ref{sec:IMP}, after we
        develop in Section~\ref{sec:PARAM} the neceessary conditions on choosing $\rho$
        and $\sigma$.

\section{Relation to the Work of Lasserre}\label{sec:LAS}
        Lasserre~\cite{La16} showed that solving
        problem~\eqref{prob:BQP01} is equivalent to minimizing a quadratic
        form in $n+1$ variables on the hypercube $\hsp$.
	In this section we show that this work of Lasserre falls into our concept of an exact penalty method over discrete sets.  	
        In fact, we will show that the choice of the parameters $\rho_\Las$
        and $\sigma_\Las$ in~\cite{La16} satisfies the assumptions of Theorem~\ref{thm:PARAM}. 

        The parameters $\rho_\Las$ and $\sigma_\Las$ in~\cite{La16} are defined
        using the
        minimum and the maximum of the standard SDP relaxation (ignoring
        the linear constraints $Ax=b$), i.e.,
 	\begin{subequations}\label{eqn:r1r2}
	\begin{align}
	\hat{\ell} = \min &\left\{ \langle F,X \rangle + c^\top x + \alpha \mid \begin{bmatrix}	1 & x^\top \\ x & X 	\end{bmatrix} \psd, \ \diag(X) = e \right\},\label{eq:r1}\\
	\hat{u} = \max &\left\{ \langle F,X \rangle + c^\top x + \alpha \mid \begin{bmatrix}	1 & x^\top \\ x & X 	\end{bmatrix} \psd, \ \diag(X) = e \right\}.\label{eq:r2}
	\end{align}
	\end{subequations}
	The threshold and the penalty parameter are defined as
        $\rho_{\Las} = \max \big\{\lvert \hat{\ell} \rvert,
        \lvert \hat{u} \rvert \big\}$ and $\sigma_{\Las} = 2
        \cdot \max \big\{ \lvert \hat{\ell} \rvert, \lvert
        \hat{u}\rvert \big\} + 1$, respectively.
        \begin{lemma}\label{lem:LAS}
          The parameters $\rho_{\Las}$ and $\sigma_{\Las}$ satisfy the assumptions of Theorem~\ref{thm:PARAM}.
        \end{lemma}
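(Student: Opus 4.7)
The plan is to verify each of the two hypotheses of Theorem~\ref{thm:PARAM} directly from the definitions of $\rho_{\Las}$ and $\sigma_{\Las}$, using as the only non-trivial ingredient the fact (from Remark~\ref{rem:axminusb}) that $\|Ax-b\|^2$ is a positive integer whenever $x\in\Delta^c$.

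First I would observe that \eqref{eq:r1}--\eqref{eq:r2} is nothing other than a relaxation of optimizing $f(x)$ over $x\in\hs$: for any $x\in\hs$ the rank-one lift $X=xx^\top$ is feasible in both \eqref{eq:r1} and \eqref{eq:r2} with objective value $\langle F,xx^\top\rangle + c^\top x+\alpha = f(x)$, by the identity~\eqref{eq:rank1formulation}. Consequently
\[
\hat{\ell}\ \le\ f(x)\ \le\ \hat{u}\qquad\text{for every } x\in\hs.
\]
This inequality is the workhorse of the rest of the argument.

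To verify assumption~\ref{thm:a1}, note that $\Delta\subseteq\hs$, so any feasible $x$ satisfies $f(x)\le \hat{u}\le|\hat{u}|\le\max\{|\hat{\ell}|,|\hat{u}|\}=\rho_{\Las}$, as required. For assumption~\ref{thm:a2}, take $x\in\Delta^c$. By Remark~\ref{rem:axminusb}, the vector $Ax-b$ has integer entries, and since $x\notin\Delta$ it is nonzero, whence $\|Ax-b\|^2\ge 1$. Combining with the two-sided SDP bound above,
\[
h(x)\ =\ f(x)+\sigma_{\Las}\|Ax-b\|^2\ \ge\ \hat{\ell}+\sigma_{\Las}\ \ge\ -|\hat{\ell}|+2\max\{|\hat{\ell}|,|\hat{u}|\}+1\ \ge\ \max\{|\hat{\ell}|,|\hat{u}|\}+1\ >\ \rho_{\Las},
\]
which is precisely the required strict inequality.

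There is really no hard step here; the only thing that needs care is to recognize that the ``$+1$'' in the definition of $\sigma_{\Las}$ is tight exactly because the smallest possible value of $\|Ax-b\|^2$ on $\Delta^c$ is~$1$, a fact that rests entirely on the integrality argument in Remark~\ref{rem:axminusb}. Without that integrality, the infeasibility gap could be made arbitrarily small and no finite choice of $\sigma$ would separate $\Delta$ from $\Delta^c$.
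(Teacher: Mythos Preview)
Your proof is correct and follows essentially the same route as the paper's: both verify assumption~\ref{thm:a1} via $f(x)\le\hat{u}\le\rho_{\Las}$ and assumption~\ref{thm:a2} via $h(x)\ge\hat{\ell}+\sigma_{\Las}>\rho_{\Las}$, relying on Remark~\ref{rem:axminusb} for the bound $\|Ax-b\|^2\ge 1$ on $\Delta^c$. The only cosmetic difference is that you first establish the two-sided bound $\hat{\ell}\le f(x)\le\hat{u}$ over all of $\hs$ (slightly more than needed), whereas the paper invokes the relaxation property separately for each direction.
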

        \begin{proof}
		Since Problem~\eqref{eq:r2} is a relaxation of $\max
                \big\{ f(x) \mid x \in \Delta \big\}$,  it is easy to see that 
		\begin{equation*}
		\rho_{\Las} = \max \big\{ \lvert \hat{\ell} \rvert, \lvert \hat{u} \rvert \big\} \geq \lvert \hat{u} \rvert \geq \hat{u} \geq\max \big\{ f(x) \mid x \in \Delta \big\}.
		\end{equation*}
		Hence, every feasible solution of
                Problem~\eqref{prob:BQP} is bounded by $\rho_{\Las}$,
                which is assumption~\ref{thm:a1} of Theorem~\ref{thm:PARAM}.

		Assume now $x \in \Delta^c$, then the penalty added is
                at least $\sigma_{\Las}$ because, by Remark~\ref{rem:axminusb}, $\lVert Ax - b
                \rVert^2 \in \Z^+$.
                Therefore, and by the definition of $\hat{\ell}$, it
                follows that
		\begin{equation*}
		h(x) \geq \hat{\ell} + \sigma_{\Las} = \hat{\ell} + 2 \cdot \max \left\{ \lvert \hat{\ell} \rvert, \lvert \hat{u} \rvert \right\} + 1 \geq \max \left\{ \lvert \hat{\ell} \rvert, \lvert \hat{u} \rvert \right\} + 1 > \rho_{\Las}
		\end{equation*}  
		and thus the parameters $\rho_{\Las}$ and $\sigma_{\Las}$
                satisfy also assumption~\ref{thm:a2} of Theorem~\ref{thm:PARAM}.
	\end{proof}

    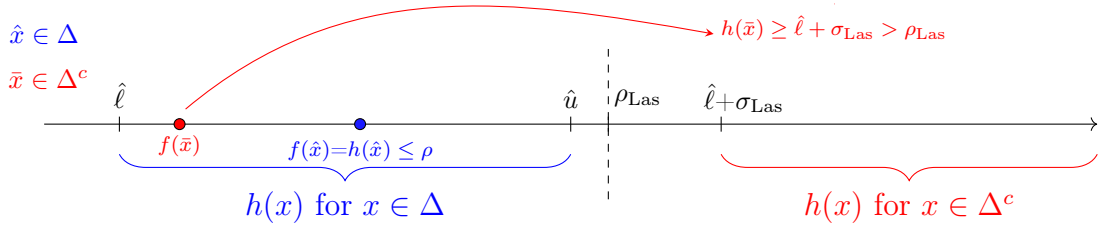
\begin{figure}
		\begin{center}
			\begin{tikzpicture}[every edge/.style={shorten <=1pt, shorten >=1pt}]
			\draw[->] (-1,0) to (13,0);
			\foreach \x/\xtext in {0/\footnotesize$\hat{\ell}$,6/\footnotesize$\hat{u}$,6.5/,8/}
			\draw(\x,0.1)--(\x,-0.1) node[above=5pt] {\xtext};
			
			\draw[black,fill=red] (0.8,0) circle (.4ex) node[below,color=red]{\fontsize{8}{12}\selectfont$f(\bar{x})$};
			\draw[black,fill=red] (9.5,1.6) circle (.0ex) node[below,color=red]{\fontsize{8}{6}\selectfont$h(\bar{x}) \ge \hat{\ell} + \sigma_{\Las} > \rho_{\Las}$};
			\coordinate (f') at (0.88,0.2);
			\coordinate (h') at (11,0.1);
.			\coordinate (after) at (7.9,1.2);
			\draw[->,>=stealth,color=red](f') to [out=40,in=170] node[above] {}  (after);
			
			\coordinate (l) at (0,-0.4);	
			\coordinate (u) at (6,-0.4);
			\draw[decorate,decoration={brace,amplitude=10pt,mirror},color=blue] (l) -- (u) node [midway,below=10pt]{$h(x) \text{ for } x \in \Delta$};
			
			\draw [dashed] (6.5,-1) -- (6.5,1);	
			\node at (6.89,0.35) {\footnotesize$\rho_{\Las}$};
			\node at (8.3,0.35) {\footnotesize$\hat{\ell}$+$\sigma_{\Las}$};
			
			\node at (-1,1.2) [color=blue] {\footnotesize$\hat{x} \in \Delta$};
			\node at (-0.94,0.6) [color=red]  {\footnotesize$\bar{x} \in \Delta^c$};
			
			\coordinate (l+s) at (8,-0.4);
			\coordinate (fin) at (13,-0.4);
			\draw[decorate,decoration={brace,amplitude=10pt,mirror},color=red] (l+s) -- (fin) node [midway,below=10pt]{$h(x) \text{ for } x \in \Delta^c$};
			
			\draw [black, fill=blue!90!white] (3.2,0) circle (.4ex) node[below=2pt,color=blue]{\fontsize{8}{12}\selectfont$f(\hat{x})$=$h(\hat{x}) \leq \rho$};
			\end{tikzpicture}
			\caption{The values of $h(x)$ for $x\in \{-1,1 \}^{n}$.}
			\label{fig:thm}
		\end{center}
	\end{figure}
	Figure~\ref{fig:thm} depicts the proof of Lemma~\ref{lem:LAS}: if $x \in \Delta$, then $\lVert Ax-b \rVert^2 = 0$, thus $h(x) = f(x) \leq \rho_{\Las}$; for $x \in \Delta^c$ the penalty added is at least $\sigma_{\Las}$, thus $h(x) \geq \hat{\ell} + \sigma_{\Las} > \rho_{\Las}$.
	
	Combining the above lemma and Theorem~\ref{thm:PARAM}, we can now restate~\cite[Theorem~2.2]{La16} as 
	\begin{corollary}\label{thm:LAS}
		Consider Problem~\eqref{prob:BQP} and
                Problem~\eqref{prob:UBQP} with optimal values $f^*$ and
                $h^*$, respectively, the threshold parameter
                $\rho_{\Las} = \max \big\{\lvert \hat{\ell}
                \rvert,\lvert \hat{u} \rvert \big\}$ and the penalty
                parameter $\sigma_{\Las} = 2
                \cdot \max \big\{ \lvert \hat{\ell} \rvert, \lvert
                \hat{u}\rvert \big\} + 1$.

                If $f^* < +\infty$, then it follows $h^* = f^*$. Moreover~\eqref{prob:BQP} has no feasible solutions if and only if $h^* > \rho_{\Las}$.
	\end{corollary}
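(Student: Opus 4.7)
The plan is to treat this corollary as a direct consequence of the machinery already built, rather than reproving anything from scratch. The statement is literally the conclusion of Theorem~\ref{thm:PARAM} applied to the specific parameter pair $(\rho_{\Las},\sigma_{\Las})$, so the proof should amount to verifying that this pair is eligible and then quoting the theorem.

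First I would observe that Lemma~\ref{lem:LAS} has already done the substantive verification: it shows that $\rho_{\Las}=\max\{\lvert\hat{\ell}\rvert,\lvert\hat{u}\rvert\}$ satisfies assumption~\ref{thm:a1} of Theorem~\ref{thm:PARAM} (every feasible value of Problem~\eqref{prob:BQP} is at most $\hat{u}\le\rho_{\Las}$, because \eqref{eq:r2} is a relaxation of the maximization over $\Delta$), and that $\sigma_{\Las}=2\max\{\lvert\hat{\ell}\rvert,\lvert\hat{u}\rvert\}+1$ satisfies assumption~\ref{thm:a2} (for $x\in\Delta^c$ the penalty term is at least $\sigma_{\Las}$ by Remark~\ref{rem:axminusb}, and adding this to the lower bound $\hat{\ell}$ exceeds $\rho_{\Las}$).

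Given this, the proof of the corollary is a one-line invocation: the hypotheses of Theorem~\ref{thm:PARAM} are met with $\rho=\rho_{\Las}$ and $\sigma=\sigma_{\Las}$, hence its conclusion applies verbatim, yielding $h^*=f^*$ whenever $f^*<+\infty$ and the equivalence ``Problem~\eqref{prob:BQP} is infeasible $\iff h^*>\rho_{\Las}$''. There is no genuine obstacle to overcome here; the only thing to be careful about is making clear that this corollary is presented as a reformulation of~\cite[Theorem~2.2]{La16} in the language of the present framework, and not as an independent result. I would therefore keep the proof to two or three sentences, explicitly citing Lemma~\ref{lem:LAS} and Theorem~\ref{thm:PARAM} and indicating that the corollary is immediate from their combination.
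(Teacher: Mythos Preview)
Your proposal is correct and matches the paper's approach exactly: the paper presents this corollary as an immediate consequence of Lemma~\ref{lem:LAS} and Theorem~\ref{thm:PARAM} (it does not even include a separate proof environment, simply introducing the corollary with ``Combining the above lemma and Theorem~\ref{thm:PARAM}''). Your suggestion to keep the argument to a brief citation of these two results is precisely what is done.
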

	
	Summarizing, by solving two semidefinite programs with
        variables $X\in \Sy$ and $x\in \R^n$, we can define a
        threshold and a penalty parameter satisfying the assumptions in
        Theorem~\ref{thm:PARAM} and thus apply Algorithm~\expedis.
	
	\section{Conditions on the Threshold and the Penalty
          Parameter}\label{sec:PARAM}
        In Section~\ref{sec:LAS} we \rew{proved} that the parameters chosen
        in~\cite{La16} are a particular choice for $\rho$ and $\sigma$
        in our algorithm \expedis. 
        In this section we investigate necessary conditions on the
        parameters to satisfy the assumptions of
        Theorem~\ref{thm:PARAM} in order to give a wider choice on
        computing $\rho$ and $\sigma$.
        Large penalty parameters can lead to huge numbers in the
        Laplacian of the graph which in turn can have negative effects
        on the computational time for finding the maximum cut.
        Hence, we aim in finding small
        parameters $\rho$ and $\sigma$, still large enough to satisfy
        the assumptions of Theorem~\ref{thm:PARAM}.
        To this end, we define 
	\begin{subequations}\label{eqn:lstarustar}
        \begin{align}
          \ell^* &= \min \big\{ f(x) \mid x \in \Delta^c \big\}, \label{eqn:lstar}\\
          u^* &= \max \big\{ f(x) \mid x \in \Delta \big\}. \label{eqn:ustar}
        \end{align}
	\end{subequations}

	\begin{observation}
	  If $u^* < \ell^*$, then $f(\hat{x}) < f(\bar{x})$ for any
          $\hat{x} \in \Delta$ and any $\bar{x} \in \Delta^c$. Hence,
          any minimizer over $\hs$ will satsify $Ax=b$, i.e.,
          $f^* = \min
          \left\{ f(x) \mid x \in \hs \right\}$ and 
          we can simply ignore all the equality
          constraints because minimization forces $Ax=b$ to hold.
	\end{observation}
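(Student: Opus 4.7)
The plan is to unpack the definitions of $\ell^*$ and $u^*$ directly and chain the two inequalities. By construction, for any $\hat{x} \in \Delta$ one has $f(\hat{x}) \leq u^*$, and for any $\bar{x} \in \Delta^c$ one has $f(\bar{x}) \geq \ell^*$. Under the hypothesis $u^* < \ell^*$, these bounds combine as
\begin{equation*}
f(\hat{x}) \;\leq\; u^* \;<\; \ell^* \;\leq\; f(\bar{x}),
\end{equation*}
which is precisely the first claim. I would present this as a short two-line calculation.

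For the second claim, I would use that $\hs = \Delta \cup \Delta^c$ is a disjoint decomposition. Assuming $\Delta \neq \emptyset$ (otherwise Problem~\eqref{prob:BQP} is infeasible, i.e.\ $f^* = +\infty$, and there is nothing to optimize), any minimizer $x^*$ of $f$ over $\hs$ must lie in $\Delta$: if instead $x^* \in \Delta^c$, the strict inequality above would produce some $\hat{x} \in \Delta \subseteq \hs$ with $f(\hat{x}) < f(x^*)$, contradicting minimality. Hence the unconstrained and constrained minima coincide,
\begin{equation*}
\min\big\{ f(x) \mid x \in \hs \big\} \;=\; \min\big\{ f(x) \mid x \in \Delta \big\} \;=\; f^*,
\end{equation*}
so dropping the constraints $Ax = b$ does not change the optimum.

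There is no real obstacle here; the observation is essentially a separation argument between the $f$-values attained on $\Delta$ and on $\Delta^c$. The only care needed is the degenerate cases: if $\Delta = \emptyset$ then $u^*$ is undefined (or $-\infty$) and the statement is vacuous, while if $\Delta^c = \emptyset$ then $\hs = \Delta$ and the constraints are already implicit, so in either edge case the conclusion holds trivially.
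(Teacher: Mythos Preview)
Your argument is correct and is precisely the natural direct verification from the definitions of $\ell^*$ and $u^*$; the paper itself states this as an observation without proof, so your write-up simply spells out what the authors took as self-evident.
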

        Due to this observation, from now on we assume $\ell^* < u^*$ throughout
        this paper.
        
	\begin{lemma}\label{lem:PARAMbest}
		The parameters $\rho^* = u^*$ and $\sigma^* = u^*
                - \ell^* + \epsilon$ satisfy the assumptions of Theorem~\ref{thm:PARAM}.
	\end{lemma}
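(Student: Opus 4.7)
The plan is to verify the two assumptions of Theorem~\ref{thm:PARAM} directly from the definitions of $u^*$ and $\ell^*$, relying on Remark~\ref{rem:axminusb} to handle the discreteness of $\|Ax-b\|^2$. No significant obstacle is expected; the argument is essentially a one-line check for each assumption, with $\epsilon$ playing the role of a strictly positive slack.

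First, for assumption~\ref{thm:a1}, I would observe that by the very definition of $u^*$ in~\eqref{eqn:ustar}, every feasible $x\in \Delta$ satisfies $f(x) \le u^* = \rho^*$, so Problem~\eqref{prob:BQP} has no feasible solution with value exceeding $\rho^*$. This is immediate and requires nothing beyond the definition.

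Next, for assumption~\ref{thm:a2}, I would pick any $x \in \Delta^c$. By Remark~\ref{rem:axminusb}, the vector $Ax-b$ has integer entries, and since $x \notin \Delta$ it is nonzero, giving $\|Ax-b\|^2 \ge 1$. Combining this with $f(x) \ge \ell^*$ (by definition of $\ell^*$ in~\eqref{eqn:lstar}), I get
\begin{equation*}
h(x) = f(x) + \sigma^* \|Ax-b\|^2 \;\ge\; \ell^* + \sigma^* \;=\; \ell^* + (u^* - \ell^* + \epsilon) \;=\; u^* + \epsilon \;>\; u^* \;=\; \rho^*,
\end{equation*}
which establishes the second assumption.

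The only subtle point to flag is the use of Remark~\ref{rem:axminusb} to guarantee the integrality of $\|Ax-b\|^2$, which is what makes the penalty term contribute at least~$\sigma^*$ and not merely an arbitrarily small positive quantity. Assuming $\epsilon > 0$, the strict inequality $h(x) > \rho^*$ holds as required, and both hypotheses of Theorem~\ref{thm:PARAM} are satisfied.
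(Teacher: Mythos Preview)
Your proof is correct and follows essentially the same approach as the paper's own proof: both verify assumption~\ref{thm:a1} directly from the definition of $u^*$, and both verify assumption~\ref{thm:a2} by invoking Remark~\ref{rem:axminusb} to get $\lVert Ax-b\rVert^2\ge 1$ for $x\in\Delta^c$, then combining $f(x)\ge\ell^*$ with $\sigma^*=u^*-\ell^*+\epsilon$ to obtain $h(x)\ge \rho^*+\epsilon>\rho^*$. The only minor point you leave implicit is that $\sigma^*>0$ (needed so that $\sigma^*\lVert Ax-b\rVert^2\ge\sigma^*$), but this follows from the paper's standing assumption $\ell^*<u^*$.
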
 

	\begin{proof}
	  To satisfy assumption~\ref{thm:a1} in
          Theorem~\ref{thm:PARAM}, the threshold parameter $\rho$ must
          be an upper bound on the feasible values of
          Problem~\eqref{prob:BQP}. 
          Hence it follows $\rho \geq u^*$. Since there are no
          other constraints on $\rho$, the smallest value of a
          threshold parameter satisfying the assumption of 
          Theorem~\ref{thm:PARAM} is $u^*$.
          Thus we set $\rho^* = u^*$.

	  To satisfy assumption~\ref{thm:a2} in
          Theorem~\ref{thm:PARAM}, 
	  we have to show that $h(x) > \rho^*$ for all $x \in
          \Delta^c$. Let $x \in \Delta^c$, thus
          $\lVert Ax - b\rVert^2$ is a nonnegative
          integer (see Remark~\ref{rem:axminusb}) and
          the penalization added is at least $\sigma$,
          hence $h(x) \geq \ell^* + \sigma$.
          By setting $\sigma^* = u^* - l^* + \epsilon$ it
          follows  
	  \[h(x) \geq \ell^* + \sigma^* = \ell^* + u^* -
          \ell^* + \epsilon = \rho^* + \epsilon > \rho^*. \]
          Thus assumption~\ref{thm:a2} is satisfied. 
	\end{proof}
        
	The value $\sigma^*$ cannot be further decreased, i.e., it is
        the smallest possible formulation of the penalty parameter in
        order to have the assumptions of Theorem~\ref{thm:PARAM}
        satisfied.
	
	\begin{proposition}\label{ex:TIGHT}
		Let $\ell^*$ and $u^*$ be the bounds defined in~\eqref{eqn:lstarustar} and assume the penalty parameter to be $\hat{\sigma} = u^* - \ell^*$. Then there exist binary quadratic problems for which~\eqref{prob:UBQP} is minimized by some vector $x \in \Delta^c$, while $\Delta$ is nonempty. 
	\end{proposition}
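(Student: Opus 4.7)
The plan is to prove this proposition by exhibiting an explicit small example, since only existence is claimed. The strategy is to engineer a problem in which the lower bound $h(x) \ge \ell^* + \hat{\sigma}$ used in the proof of Lemma~\ref{lem:PARAMbest} is attained with equality by some $x \in \Delta^c$, and simultaneously equals the optimal feasible value $f^*$.

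First I would identify exactly what is needed for a worst case. With $\hat{\sigma} = u^* - \ell^*$, an infeasible point $\bar{x}$ is a minimizer of~\eqref{prob:UBQP} when $h(\bar{x}) = f(\bar{x}) + \hat{\sigma}\,\|A\bar{x}-b\|^2 \le f^*$. Since $\|A\bar{x}-b\|^2 \ge 1$ on $\Delta^c$ and $f(\bar{x}) \ge \ell^*$, the smallest possible value of $h(\bar{x})$ is $\ell^* + (u^* - \ell^*) = u^*$. Thus I need the three simultaneous conditions: (a) some $\bar{x} \in \Delta^c$ attains $f(\bar{x}) = \ell^*$; (b) this same $\bar{x}$ satisfies $\|A\bar{x}-b\|^2 = 1$; (c) the feasible optimum satisfies $f^* = u^*$, i.e., $f$ is constant on $\Delta$.

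Next, I would realise these conditions with a two-variable instance. Take $n=2$, with the single linear equation $y_1 + y_2 = 1$ and objective $\hat{f}(y) = -y_1 y_2$. Evaluating at the four binary points gives $\hat{f}(0,0)=0$, $\hat{f}(1,0)=0$, $\hat{f}(0,1)=0$, $\hat{f}(1,1)=-1$. The feasible set is $\{(1,0),(0,1)\}$, so $f^* = u^* = 0$, while $\ell^* = -1$ (achieved at $(1,1)$), giving $\hat{\sigma} = u^* - \ell^* = 1$. After the change of variables to $\{-1,1\}$, one checks that the corresponding $(A,b)$ satisfy $\|Ax - b\|^2 = 1$ at the image of $(1,1)$, as required.

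Finally I would evaluate $h(y) = \hat{f}(y) + \hat{\sigma}\,(y_1+y_2-1)^2$ at all four points to obtain the values $1,0,0,0$ at $(0,0),(1,0),(0,1),(1,1)$ respectively. The minimum value $0$ is attained at the infeasible point $(1,1)$ (as well as at the feasible ones), so $\Delta$ is nonempty yet~\eqref{prob:UBQP} has a minimizer in $\Delta^c$, establishing the proposition. No step is really an obstacle; the only subtlety is ensuring $\|Ax-b\|^2 = 1$ after the $0/1 \to \pm 1$ reformulation, which is guaranteed by Remark~\ref{rem:axminusb} and verified directly on the chosen instance.
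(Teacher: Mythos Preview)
Your proof is correct and follows the same strategy as the paper: exhibit an explicit small instance where equality $h(\bar x)=f^*$ is attained at some $\bar x\in\Delta^c$. The paper uses an even smaller one-dimensional example ($\hat A=1$, $\hat b=1$, $\hat c=2$, $\hat F=0$, yielding $u^*=2$, $\ell^*=0$, and $h(1)=h(-1)=2$), whereas you build a two-variable instance; your preliminary analysis of the three tightness conditions (a)--(c) is a nice addition that the paper omits, but otherwise the arguments are essentially the same.
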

	\begin{proof}
		Let the parameters be $\hat{A} = 1$, $\hat{b} = 1$, $\hat{c} = 2$ and $\hat{F}=0$, i.e., the problem is one-dimensional. The transformed parameters are $A = 0.5$, $b= 0.5$, $c = 1$ and $F = 0$, while the additive constant is $\alpha = 1$. Let $x = 1$ be the (unique) optimal solution and $y = \unaryminus 1$ be a (the unique) point in $\Delta^c$. It follows $f(x) = 2$ and $f(y) = 0$. It is easy to see that $\ell^* = \min \big\{f(x) \mid x \in \Delta^c \big\} = 0$ and $u^* = \max \{f(x) \mid x \in \Delta \} = 2$. Assuming $\hat{\sigma} = u^* - \ell^* = 2$, it follows that $h(x) = f(x) + 0 = 2$ and $h(y) = f(y) + \hat{\sigma} \lVert Ax-b \rVert^2 = 2$. Thus Problem~\eqref{prob:BQP} is feasible but the penalized problem is minimized by a vector in the set $\Delta^c$.
	\end{proof}
	
	Clearly, finding $u^*$ is as hard as solving
        Problem~\eqref{prob:BQP} and computing $\ell^*$ is also out of
        reach.
	But any bounds $\ell \leq \ell^*$ and $u \geq u^*$ also
        give rise to a pair of parameters $\rho$ and $\sigma$ that ensures our
        desired assumptions to hold.
	     
	\begin{proposition}\label{prop:SIGMAOPT}
	  Let $\ell$ and $u$ be a lower and an upper bound,
          respectively, such that $\ell \leq \ell^*$ and $u \geq
          u^*$. Moreover, the parameters and the penalized function
          are defined similar as above, i.e., $\rho = u$, $\sigma = u - \ell + \epsilon$. Then the parameters satisfy the assumptions of Theorem~\ref{thm:PARAM}. 
	\end{proposition}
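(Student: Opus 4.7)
The plan is to imitate the proof of Lemma~\ref{lem:PARAMbest} almost verbatim, replacing the tight quantities $u^*$ and $\ell^*$ by their bounds $u$ and $\ell$, and checking that the two required inequalities still go through. Since the only way the exact values $u^*, \ell^*$ enter that earlier argument is through the inequalities $f(x) \le u^*$ on $\Delta$ and $f(x) \ge \ell^*$ on $\Delta^c$, relaxing to bounds preserves each step in the correct direction.

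For assumption~\ref{thm:a1} of Theorem~\ref{thm:PARAM}, I would simply observe that by definition $u^* = \max\{f(x) \mid x \in \Delta\}$, so every feasible point satisfies $f(x) \le u^* \le u = \rho$. This is precisely the statement that no feasible solution of~\eqref{prob:BQP} exceeds the threshold.

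For assumption~\ref{thm:a2}, I would take an arbitrary $x \in \Delta^c$ and combine two ingredients. First, $f(x) \ge \ell^* \ge \ell$ by the definition of $\ell^*$ and the hypothesis $\ell \le \ell^*$. Second, Remark~\ref{rem:axminusb} guarantees that $Ax - b$ is integer-valued on $\hs$, so for $x \in \Delta^c$ we must have $\lVert Ax - b\rVert^2 \ge 1$. Putting these together,
\[
h(x) = f(x) + \sigma \lVert Ax - b\rVert^2 \ge \ell + \sigma = \ell + (u - \ell + \epsilon) = u + \epsilon > u = \rho,
\]
which is assumption~\ref{thm:a2}.

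There is no real technical obstacle; the only subtlety is the role of $\epsilon > 0$. The strict inequality in the last display relies on $\epsilon$ being strictly positive, and Proposition~\ref{ex:TIGHT} already demonstrates that $\epsilon$ cannot be dropped even in the tightest case $\ell = \ell^*$, $u = u^*$. So this proof is as sharp as Lemma~\ref{lem:PARAMbest}, and the passage to generic bounds costs nothing beyond a monotonicity observation.
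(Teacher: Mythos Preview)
Your proof is correct and follows exactly the same approach as the paper, which simply notes that $u \ge u^*$ gives assumption~\ref{thm:a1} and that $\ell \le \ell^*$ together with the argument of Lemma~\ref{lem:PARAMbest} gives assumption~\ref{thm:a2}. You have merely spelled out explicitly the steps that the paper abbreviates by reference.
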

	\begin{proof}
		Since $u \geq u^*$, we have that $\rho = u$ clearly
                ensures assumption~\ref{thm:a1} to hold. And since
                $\ell \leq \ell^*$, the second assumption also holds,
                by using the same arguments as in the proof of Lemma~\ref{lem:PARAMbest} for $\sigma = u - \ell + \epsilon$. 
	\end{proof}

       \subsection{Comparison to the Parameters of Lasserre}
       We now compare the choices of the parameters $\rho$ and $\sigma$ given 
        in Lemma~\ref{lem:LAS} and Lemma~\ref{lem:PARAMbest}.
        Note that the values of the parameters $\rho$ and $\sigma$ are obtained
        through some bounds $\ell$ and $u$. In~\cite{La16}, $\hat{\ell}$ and
        $\hat{u}$ as defined in~\eqref{eqn:r1r2} are used.
        However, Proposition~\ref{prop:SIGMAOPT}, shows that any $\ell\le \ell^*$ 
        and any $u \ge u^*$
        give rise to valid parameters $\rho$ and $\sigma$.
        In order to compare the two different formulations of $\rho$
        and $\sigma$, we fix a pair ($\ell$, $u$). 
	The following proposition shows that our choice of parameters
        is always less (or equal) to the ones proposed by Lasserre~\cite{La16}.
	
	\begin{proposition}\label{prop:PAR+}
		Let $\ell$ and $u$ be any pair of lower and upper bounds such that $\ell\le \ell^*$ and $u \ge u^*$.
                We denote the Lasserre and our new formulations of the
                parameters as follows.
                \[\begin{array}{ll}
                  \rho_{\Las} = \max \{\lvert \ell \rvert, \lvert u
                  \rvert \}  &\sigma_{\Las} = 2\cdot \max \{\lvert \ell \rvert,
                  \lvert u \rvert \} + 1\\
                  \rho_{\GW} = u & \sigma_{\GW} = u - \ell + \epsilon
                \end{array}\]
                Then $\rho_{\GW} \leq \rho_{\Las}$ and $\sigma_{\GW} < \sigma_{\Las}$.
	\end{proposition}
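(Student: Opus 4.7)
The proof is essentially a two-line calculation, so my plan is to spell out the basic inequalities carefully and make the role of $\epsilon$ explicit. The key observation in both bounds is that any real number is at most its absolute value, hence at most $\max\{|\ell|,|u|\}$.

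For the threshold parameter, I would simply chain
\[
\rho_{\GW} \;=\; u \;\le\; \lvert u\rvert \;\le\; \max\{\lvert\ell\rvert, \lvert u\rvert\} \;=\; \rho_{\Las},
\]
which settles the first inequality without any further assumption on $\ell$ or $u$.

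For the penalty parameter, my plan is to bound $u$ and $-\ell$ separately by $\max\{|\ell|,|u|\}$ and combine them:
\[
u - \ell \;\le\; \lvert u\rvert + \lvert \ell \rvert \;\le\; 2\,\max\{\lvert \ell\rvert, \lvert u\rvert\}.
\]
Then I would add $\epsilon$ on both sides and conclude
\[
\sigma_{\GW} \;=\; u - \ell + \epsilon \;\le\; 2\,\max\{\lvert\ell\rvert,\lvert u\rvert\} + \epsilon \;<\; 2\,\max\{\lvert\ell\rvert,\lvert u\rvert\} + 1 \;=\; \sigma_{\Las},
\]
where the strict inequality uses that $\epsilon$ is chosen in $(0,1)$ (which is the only regime in which $\epsilon$ is needed, since the penalty term $\sigma\|Ax-b\|^2$ for $x\in\Delta^c$ is at least $\sigma$ by the integrality noted in Remark~\ref{rem:axminusb}; any $\epsilon\in(0,1]$ suffices, and $\epsilon<1$ is harmless).

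The only potentially delicate point is making sure the convention on $\epsilon$ is compatible with a strict inequality; since $\epsilon$ is a free small positive parameter, one can simply take it smaller than $1$ (or, equivalently, replace $<$ by $\le$ and note that equality never occurs for the choices used in practice). No case analysis on the signs of $\ell$ and $u$ is necessary, since $|\cdot|$ absorbs them. This is essentially the entire argument.
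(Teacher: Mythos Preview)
Your proof is correct and essentially identical to the paper's: for $\rho$ the paper chains $u \le |u| \le \max\{|\ell|,|u|\}$ exactly as you do, and for $\sigma$ it writes $u-\ell+\epsilon < |u-\ell|+1 \le |u|+|\ell|+1 \le 2\max\{|u|,|\ell|\}+1$, which is the same argument up to an inessential reordering of the steps. The paper leaves the assumption $\epsilon<1$ implicit in the strict inequality, so your discussion of this point is a welcome clarification rather than a deviation.
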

	\begin{proof} The first inequality holds since 
          $\rho_{\GW} = u \leq \lvert u \rvert \leq \max \{\lvert \ell
          \rvert , \vert u \rvert\} = \rho_{\Las}$.

          And the following arguments prove the second inequality:
          $\sigma_{\GW} = u - \ell + \epsilon < \lvert u-\ell \rvert + 1 \leq \lvert u \rvert + \lvert \ell \rvert + 1 \leq 2 \cdot \max \{\lvert u \rvert, \lvert \ell \rvert \} + 1 = \sigma_{\Las}.$
	\end{proof}

	\section{Choosing $\ell$ and $u$ efficiently}\label{sec:IMP}

        In Section~\ref{sec:PARAM} we give a recipe for computing
        a pair of parameters $\rho$ and $\sigma$ that satisfies the assumptions of
        Theorem~\ref{thm:PARAM} by using bounds on 
        $\ell^* = \min \{ f(x) \mid x \in \Delta^c \}$ and 
        $u^* = \max \{ f(x) \mid x \in \Delta\}$.
        The bounds $\hat{\ell}$ and $\hat{u}$, as introduced in
        Section~\ref{sec:LAS}, are candidates since clearly 
        $\hat{\ell} \leq \ell^*$ and $\hat{u} \geq u^*$.

        The time for solving the max-cut instance is influenced by the
        penalty parameter $\sigma$. We aim in finding small values for
        $\sigma$ (but sufficiently large to satisfy the assumptions in
        Theorem~\ref{thm:PARAM}) in order to solve the max-cut problem
        in reasonable time. 
        In this section we will present alternatives to compute tight
        bounds on $\ell^*$ and $u^*$. 
        	
    \subsection{Adding Cutting Planes}\label{sec:cuttingplanes}
    
    The bounds $\hat{\ell}$ and $\hat{u}$ defined in~\eqref{eqn:r1r2} are the solution of the standard
    SDP relaxation.
    These bounds can be strengthened
    by adding cutting planes.
    We denote the upper and lower bound computed by solving the
    standard SDP relaxation 
    with the addition of triangle inequalities and, possibly, of a set of 5-clique inequalities (see Section~\ref{sec:sdp}) by $\tilde{\ell}$ and $\tilde{u}$, i.e.,
   \begin{subequations}\label{eq:tilde}
   \begin{align}
   \tilde{\ell} = \min &\left\{ \langle F,X \rangle + c^\top x + \alpha \mid \begin{bmatrix}	1 & x^\top \\ x & X \end{bmatrix} \psd, \ \diag(X) = e, \ X \in \met \cap X_{I} \right\},\label{eq:ltilde}\\
   \tilde{u} = \max &\left\{ \langle F,X \rangle + c^\top x + \alpha \mid \begin{bmatrix}	1 & x^\top \\ x & X 	\end{bmatrix} \psd, \ \diag(X) = e, \ X \in \met \cap X_{I} \right\}.\label{eq:utilde}
   \end{align}
   \end{subequations}
   where $X_I$ is the set of 5-clique inequalities generated by the heuristic procedure described in Section~\ref{sec:sdp}.
   
   \subsection{Including the Constraints $Ax = b$}
        
        The optimizer leading to $u^*$
        is in the set $\Delta$, therefore
        it satisfies the constraints $Ax = b$.
        In order to take these constraints into account when computing a bound on $u^*$, we
        follow an idea introduced by S.~Burer in~\cite{Bu10} and
        add the equality constraints
        to the standard SDP relaxation.
	
	\begin{proposition}~\cite[Proposition~1]{Bu10}\label{prop:SAM}
		Let $Y$ be the matrix $Y = \begin{bmatrix}
		1 & x^\top \\ x & X \end{bmatrix}$ and suppose $Y \psd$. Moreover let $M$ be defined as $M = [b, \ \unaryminus A]$. Then the following are equivalent:
		\begin{enumerate}
			\item\label{th:1} $Ax = b$ and $\diag(AXA^\top) = b^2$
			\item\label{th:2} $MYM^\top = \mathbf{0}$
			\item\label{th:3} $MY = \mathbf{0}$
		\end{enumerate}
	\end{proposition}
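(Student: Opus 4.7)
The plan is to prove the three equivalences by pinning down an explicit algebraic identity for $MYM^\top$ that decomposes it as a sum of two positive semidefinite matrices, and then exploiting the rigidity of PSD matrices with zero diagonal.

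First I would carry out the block multiplication. Writing $M = [b,\; -A]$ and expanding the product with $Y = \begin{bmatrix} 1 & x^\top \\ x & X \end{bmatrix}$, direct computation gives
\begin{equation*}
  MYM^\top = (b - Ax)(b - Ax)^\top + A\bigl(X - xx^\top\bigr)A^\top.
\end{equation*}
The first summand is manifestly PSD. For the second, the assumption $Y \psd$ combined with a Schur complement argument (using $Y_{00} = 1$) yields $X - xx^\top \psd$, so $A(X - xx^\top)A^\top$ is PSD as well. Thus $MYM^\top$ is a sum of two PSD matrices.

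Next I would deduce (i) $\Leftrightarrow$ (ii). For (i) $\Rightarrow$ (ii): if $Ax = b$, the first summand vanishes, and the diagonal condition $\diag(AXA^\top) = b^2$ combined with $\diag(Axx^\top A^\top) = (Ax)\odot(Ax) = b^2$ makes the diagonal of $A(X-xx^\top)A^\top$ equal to zero; since that matrix is PSD, a zero diagonal forces the whole matrix to vanish, giving $MYM^\top = \mathbf{0}$. For (ii) $\Rightarrow$ (i): if $MYM^\top = \mathbf{0}$, then each of the two PSD summands must individually be zero (their sum has zero diagonal, so each PSD summand has zero diagonal, hence each summand is zero). From $(b-Ax)(b-Ax)^\top = \mathbf{0}$ we read off $Ax=b$, and from $A(X-xx^\top)A^\top = \mathbf{0}$ we get in particular that its diagonal is zero, which rearranges to $\diag(AXA^\top) = \diag(Axx^\top A^\top) = b^2$.

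Finally, for (ii) $\Leftrightarrow$ (iii), the implication (iii) $\Rightarrow$ (ii) is immediate by post-multiplying $MY = \mathbf{0}$ with $M^\top$. For the reverse direction I would use a PSD factorization: since $Y \psd$, write $Y = LL^\top$; then $MYM^\top = (ML)(ML)^\top = \mathbf{0}$ forces $ML = \mathbf{0}$, and therefore $MY = MLL^\top = \mathbf{0}$. Chaining the equivalences completes the proof.

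The only step that needs care is the \emph{zero-diagonal rigidity} argument, namely that a PSD matrix whose diagonal is zero must itself be zero (an easy consequence of $Z_{ii} = 0$ forcing all $Z_{ij}$ to vanish via the $2\times 2$ principal minors). Everything else is bookkeeping with block matrices, so I expect no genuine obstacle beyond keeping the algebraic identity for $MYM^\top$ straight.
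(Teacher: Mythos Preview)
Your argument is correct. The identity $MYM^\top = (b-Ax)(b-Ax)^\top + A(X-xx^\top)A^\top$ is easily verified by block multiplication, the Schur complement gives $X-xx^\top\psd$, and the zero-diagonal rigidity and Gram-factorization steps are sound.

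The paper's proof is organized a bit differently and avoids your decomposition. It proves the cycle (i)$\Rightarrow$(ii)$\Rightarrow$(iii)$\Rightarrow$(i) directly: for (i)$\Rightarrow$(ii) it computes the $i$th diagonal entry $[b_i,-A_{i,\bolddot}]Y[b_i,-A_{i,\bolddot}]^\top = b_i^2 - 2b_iA_{i,\bolddot}x + A_{i,\bolddot}XA_{i,\bolddot}^\top$ and shows it vanishes under (i), then uses only that $MYM^\top\psd$ (no Schur complement needed) together with zero-diagonal rigidity; for (ii)$\Rightarrow$(iii) it uses the same Gram factorization as you do; and for (iii)$\Rightarrow$(i) it reads $Ax=b$ off the first column of $MY=\mathbf{0}$ and then recovers $\diag(AXA^\top)=b^2$ from the diagonal of $MYM^\top=\mathbf{0}$. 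So the paper works entrywise with diagonals and never invokes $X-xx^\top\psd$, whereas your route is more structural: the PSD splitting makes the equivalence (i)$\Leftrightarrow$(ii) transparent in one shot and explains conceptually why the two conditions in (i) correspond to two separate obstructions. Either approach is short; yours adds the small overhead of the Schur complement but yields a cleaner picture.
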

	\begin{proof}
		First we show that $\ref{th:1} \implies \ref{th:2}.$ We have $[b_i, \ \unaryminus A_{i,\bolddot}]Y[b_i, \ \unaryminus A_{i,\bolddot}]^\top = b_i^2 - 2 b_i A_{i,\bolddot} x + A_{i,\bolddot} X A_{i,\bolddot}^\top = b_i^2 - 2 b_i^2 + b_i^2 = 0$. Considering all the rows of $Ax = b$ it follows that $\diag(MYM^\top) = \mathbf{0}$. Since $Y \psd$, $MYM^\top \psd$. Thus $MYM^\top = \mathbf{0}$.
		
		Next we prove $\ref{th:2} \implies \ref{th:3}.$ Since $Y \psd$, let $Y = VV^\top$ be its Gram representation. Then, it follows $0 = \tr(MYM^\top) = \tr(MVV^\top M^\top) = \lVert MV \rVert^2$. Thus $MV = \mathbf{0}$. Hence $MY = (MV)V^\top = \mathbf{0}V^\top = \mathbf{0}$.  
		
		Now we show $\ref{th:3} \implies \ref{th:1}.$ Let us consider the first column of $MY$. It is the zero vector $\mathbf{0}$, hence we have $\mathbf{0} = (M Y)_{\bolddot,1} = [b , \ \unaryminus A][1 , \ x]^\top = b - Ax$, so $Ax = b$. Moreover $MY = \mathbf{0}$ implies $MYM^\top = \mathbf{0}$, hence $0 = (MYM^\top)_{ii} = [b_i, \ \unaryminus A_{i,\bolddot}] Y [b_i, \ \unaryminus A_{i,\bolddot}]^\top$. Expanding $Y$ we have $b_i^2 -2 b_i A_{i,\bolddot}x + A_{i,\bolddot}XA_{i,\bolddot}^\top$. Since we proved above $Ax = b$, we can rewrite  $0 = b_i^2 -2 b_i A_{i,\bolddot}x + A_{i,\bolddot}XA_{i,\bolddot}^\top = -b_i^2 + (AXA^\top)_{ii}$ because $(AXA^\top)_{ii} = A_{i, \bolddot}XA_{i,\bolddot}^\top$. Thus $\diag(AXA^\top) = b^2$. 
	\end{proof}
	As a consequence of Proposition~\ref{prop:SAM}, we can compute the upper bound $u_\Delta$ by solving the SDP
	\begin{equation}\label{eqn:NEWUB}
	u_\Delta = \max \left\{ \langle F, X \rangle + c^\top x + \alpha \mid \begin{bmatrix}
	1 & \rew{x^\top} \\ x & X	\end{bmatrix} \psd, \ \diag(X) = e, \ MY = \mathbf{0} \right\},
	\end{equation}
	where $\mathbf{0}$ is an $m \times (n+1)$ matrix.
	
        We now take a closer look on the inclusion of 
        the equality constraints $MY =
        \mathbf{0}$. In fact, we can transform SDP~\eqref{eqn:NEWUB} into
        a semidefinite program of smaller dimension and 
        less constraints by projecting out the null space. 
	We define the null space of $M$, denoted $\ns(M)$, 
	as the set of vectors that are mapped to $\mathbf{0}$, i.e., $\ns(M) = \{ x \in \R^n \mid Mx = \mathbf{0} \}$. 
	We restate~\cite[Lemma~1]{Bu10} as 
	\begin{lemma}\label{lem:SAM} 
	  Let $M = [b , \ -A]$ and define the closed, convex cone
          $\mathcal{J} = \{ Z \psd \mid MZM^\top = \mathbf{0}
          \}$.  Then $\mathcal{J} = \{ NPN^\top \mid P \psd
          \}$ where $N$ is the matrix whose columns form an
          orthonormal basis of $\ns(M)$.   
	\end{lemma}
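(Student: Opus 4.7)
The plan is to prove the set equality $\mathcal{J} = \{NPN^\top \mid P \psd\}$ by showing both inclusions separately, exploiting the Gram decomposition of positive semidefinite matrices together with the defining property $MN = \mathbf{0}$ of the matrix $N$.

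For the inclusion $\{NPN^\top \mid P \psd\} \subseteq \mathcal{J}$, I would take any $P \psd$ and set $Z = NPN^\top$. Then $Z$ is positive semidefinite as a congruence transformation of a psd matrix (given any $y$, $y^\top Z y = (N^\top y)^\top P (N^\top y) \ge 0$). Moreover, since the columns of $N$ form an orthonormal basis of $\ns(M)$ we have $MN = \mathbf{0}$, hence $MZM^\top = (MN)P(MN)^\top = \mathbf{0}$, so $Z \in \mathcal{J}$.

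For the reverse inclusion $\mathcal{J} \subseteq \{NPN^\top \mid P \psd\}$, take any $Z \in \mathcal{J}$ and use the Gram decomposition $Z = VV^\top$ (with $V$ of compatible size), which exists because $Z \psd$. The condition $MZM^\top = \mathbf{0}$ then rewrites as $\lVert MV \rVert_F^2 = \tr(MVV^\top M^\top) = 0$, so $MV = \mathbf{0}$. Thus every column of $V$ lies in $\ns(M)$, and since the columns of $N$ form an orthonormal basis of this null space, each column of $V$ is a linear combination of columns of $N$; equivalently $V = NW$ for some matrix $W$ (explicitly, $W = N^\top V$, using orthonormality of $N$). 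Setting $P = WW^\top \psd$ gives $Z = VV^\top = NWW^\top N^\top = NPN^\top$, which finishes the inclusion.

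The main obstacle — though it is very mild — is the reverse inclusion, specifically the step of producing a matrix $W$ with $V = NW$. This is where the orthonormality of $N$ is essential, allowing the clean identification $W = N^\top V$. The rest of the proof is a direct chain of algebraic manipulations using that $MN = \mathbf{0}$ and the fact that $\tr(XX^\top) = 0$ implies $X = \mathbf{0}$ for real matrices, both of which have already been used in the proof of Proposition~\ref{prop:SAM} and therefore should fit seamlessly with the surrounding exposition.
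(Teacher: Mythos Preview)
Your argument is correct. Note, however, that the paper does not supply its own proof of this lemma: it is restated from \cite[Lemma~1]{Bu10} and used without proof, so there is no in-paper argument to compare against. Your two-inclusion proof via the Gram factorization $Z=VV^\top$, the identity $\tr(MVV^\top M^\top)=\lVert MV\rVert_F^2$, and the representation $V=N(N^\top V)$ using that $NN^\top$ is the orthogonal projector onto $\ns(M)$ is exactly the standard way to establish this result, and it meshes well with the techniques already used in the proof of Proposition~\ref{prop:SAM}.
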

        
		
	
		In order to simplify notation, we define the matrix $F' = \begin{bmatrix} \alpha & (c/2)^\top \\ c/2 & F \end{bmatrix}$ to obtain $\langle F,X \rangle + c^\top x + \alpha = \langle F',Y \rangle$. Reformulating \eqref{eqn:NEWUB} using Lemma~\ref{lem:SAM}, we obtain
	\begin{align*}
	u_\Delta & = \max \left\{ \langle F', Y \rangle \mid Y \psd, \ \diag(Y) = e, \ MY = \mathbf{0} \right\} \\
	& = \max \{ \langle F', NPN^\top \rangle \mid P \psd, \ \diag(NPN^\top) = e \} \\
	& = \max \{ \langle F', NPN^\top \rangle \mid P \psd, \ \langle e_je_j^\top , NPN^\top \rangle = 1, \  j \in \{1,\dots,n+1\} \} \\
	& = \max \{ \langle N^\top F' N, P \rangle \mid P \psd, \ \langle N^\top e_j e_j^\top N , P \rangle = 1, \ j \in \{1,\dots,n+1\} \} \\
	& = \max \{ \langle N^\top F' N, P \rangle \mid P \psd, \ \langle (N_{j,\bolddot})^\top N_{j,\bolddot} , P \rangle = 1, \ j \in \{1,\dots,n+1\} \}.
        \end{align*}
        
	Thus, in order to compute $u_\Delta$, we have to solve an SDP of size $\dim(N) = n+1-\rank(M)$ with $n+1$ constraints.
       \rew{ \begin{observation} In case the SDP for computing $u_\Delta$ is infeasible, clearly Problem~\eqref{prob:BQP} is infeasible as well.
        \end{observation}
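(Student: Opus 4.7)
The plan is to prove the contrapositive: if Problem~\eqref{prob:BQP} is feasible, then the SDP defining $u_\Delta$ is also feasible. Since the SDP is a relaxation of the maximization of $f$ over $\Delta$, any feasible point of the original problem should lift to a feasible point of the SDP, and I will make this lift explicit.

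First I would take an arbitrary $x\in\Delta$ and build the rank-one candidate $Y = \bar{x}\bar{x}^\top$ with $\bar{x} = [1,\ x^\top]^\top$. Then I would verify the three SDP constraints in turn: $Y\psd$ because $Y$ is a rank-one outer product; $\diag(Y)=e$ because each component of $\bar{x}$ lies in $\{-1,1\}$; and $MY = \mathbf{0}$ via Proposition~\ref{prop:SAM}. For the last constraint, I would observe that $Ax=b$ together with $X=xx^\top$ yields $AXA^\top = A x x^\top A^\top = bb^\top$, hence $\diag(AXA^\top) = b^2$; this is condition~\ref{th:1} of Proposition~\ref{prop:SAM}, which by the equivalence implies $MY=\mathbf{0}$.

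Having produced a feasible point $Y$ for SDP~\eqref{eqn:NEWUB}, I would conclude that its feasible region is nonempty whenever $\Delta\neq \emptyset$. Contrapositively, infeasibility of the SDP forces $\Delta = \emptyset$, which is exactly the assertion that Problem~\eqref{prob:BQP} is infeasible.

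There is no real obstacle here: the only substantive step is invoking Proposition~\ref{prop:SAM} on the rank-one lift, and the verification of the remaining SDP conditions is immediate from $x\in\hs$. The observation is essentially a one-line consequence of the fact that $Y = \bar{x}\bar{x}^\top$ is feasible for the relaxation whenever $x$ is feasible for~\eqref{prob:BQP}, and I would present it as such.
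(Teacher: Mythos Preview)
Your argument is correct and is exactly the natural justification: the rank-one lift $Y=\bar{x}\bar{x}^\top$ of any $x\in\Delta$ satisfies all constraints of SDP~\eqref{eqn:NEWUB}, with the $MY=\mathbf{0}$ condition following from Proposition~\ref{prop:SAM}. The paper does not actually give a proof of this observation (it simply writes ``clearly''), so your proposal makes explicit precisely the one-line relaxation argument the authors leave implicit.
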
 }

        \section{Refinements of the Algorithm}\label{sec:REF}
	\subsection{Detecting (In)feasiblity}
The threshold parameter
$\rho$ has a key role in Algorithm~\ref{alg:expa}, because it
certifies (in)feasibility of Problem~\eqref{prob:BQP}. An 
important difference between Problems~\eqref{prob:BQP} and~\eqref{prob:MC}
is that the first one can be infeasible, while for the
latter one any $\rew{\{\unaryminus 1,1\}}$ vector is a feasible solution.
We present a condition that allows early stopping of the  
branch-and-bound algorithm for solving
Problem~\eqref{prob:MC} in case of infeasibility of Problem~\eqref{prob:BQP}.

\begin{proposition}
	Let $Q$ be defined as in~\eqref{eq:Q} and let $z_{\mathrm{ub}}$ denote the global upper bound of the max-cut problem throughout the branch-and-bound algorithm. If $z_{\mathrm{ub}} < e^\top Qe - \rho$, then Problem~\eqref{prob:BQP} is infeasible.
\end{proposition}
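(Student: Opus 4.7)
The plan is to exploit the equivalence between the minimization form \eqref{prob:UBQP} and the maximization form of the max-cut problem established earlier in Section~\ref{sec:EXPA}, and then invoke the infeasibility criterion of Theorem~\ref{thm:PARAM}.

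First, I would recall the identity
\[
h^* \;=\; e^\top Q e \;-\; \max\bigl\{\bar{x}^\top C\bar{x}\mid \bar{x}\in \hsp\bigr\},
\]
derived in Section~\ref{sec:EXPA} with $C=\Diag(Qe)-Q$ (up to the $\tfrac14 L$ normalization used by the max-cut solver). Denote by $\zmc$ the optimal value of the max-cut problem underlying~\eqref{prob:MC}. Then $h^* = e^\top Q e - \zmc$.

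Second, by definition of a global upper bound inside a branch-and-bound scheme, $z_{\mathrm{ub}} \geq \zmc$ holds throughout the execution. Substituting this into the previous equation gives
\[
h^* \;=\; e^\top Q e - \zmc \;\geq\; e^\top Q e - z_{\mathrm{ub}}.
\]
Now, if the hypothesis $z_{\mathrm{ub}} < e^\top Q e - \rho$ holds, rearranging yields $e^\top Q e - z_{\mathrm{ub}} > \rho$, and combining with the inequality above gives $h^* > \rho$.

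Third, I invoke the last statement of Theorem~\ref{thm:PARAM}, which asserts that Problem~\eqref{prob:BQP} is infeasible if and only if $h^* > \rho$. This immediately yields the conclusion. There is no real obstacle in this proof; the only point that requires care is the sign bookkeeping between the minimization formulation of $h^*$ and the maximization formulation solved by BiqMac, together with the direction of the bound (upper bound on a maximum translates into a lower bound on $h^*$ after flipping signs), which is exactly what makes the threshold $e^\top Qe-\rho$ the correct early-termination cutoff.
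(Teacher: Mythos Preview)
Your proof is correct and follows essentially the same route as the paper: both use the identity $h^*=e^\top Qe-\zmc$, the inequality $\zmc\le z_{\mathrm{ub}}$, and the infeasibility criterion $h^*>\rho$ from Theorem~\ref{thm:PARAM}, just assembled in a slightly different order.
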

\begin{proof}
	Let $\zmc = \max \left\{x^\top C x \mid x\in \hsp \right\}$
	where $C=\frac{1}{4}L$ and $L$ is the Laplace matrix of the
	graph with adjacency matrix as given in~\eqref{eq:adjacency}.
	We know that $h^* = e^\top Qe - \zmc$.
	Moreover, by Theorem~\ref{thm:PARAM}, Problem~\eqref{prob:BQP} is infeasible if and only if $h^* > \rho$. Combining these
	two considerations, it follows that $\zmc < e^\top Q e - \rho$ is equivalent to saying that
	Problem~\eqref{prob:BQP} has no feasible solution. By
	definition $\zmc \le z_{\mathrm{ub}}$,
	hence $z_{\mathrm{ub}} < e^\top Q e - \rho$ implies
	infeasibility of the original problem. 
\end{proof}

On the other hand, the following lemma shows that any cut with value above a certain threshold
gives rise to a feasbile solution of Problem~\eqref{prob:BQP}.

\begin{lemma}\label{lem:feasSol}
	Let $z_{\mathrm{lb}}$ be the value of some cut, i.e., a
	lower bound on $\zmc$. If
	$z_{\mathrm{lb}} \geq e^\top Qe - \rho$, we can derive a
	feasible vector $x_{\mathrm{lb}} \in \Delta$ from the cut associated with $z_{\mathrm{lb}}$.
\end{lemma}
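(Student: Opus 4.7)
My plan is to invert the construction that turned \eqref{prob:UBQP} into the max-cut instance \eqref{prob:MC}, and then appeal directly to the separation inequality~\eqref{ineq:HRHO} from the proof of Theorem~\ref{thm:PARAM}.

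First I would take any cut realising the value $z_{\mathrm{lb}}$ and express it as a vector $\bar{x}_{\mathrm{lb}} \in \hsp$. Since $\bar{x}$ and $-\bar{x}$ yield the same cut, I can assume without loss of generality that $(\bar{x}_{\mathrm{lb}})_0 = 1$, and write $\bar{x}_{\mathrm{lb}} = \begin{bmatrix} 1 \\ x_{\mathrm{lb}} \end{bmatrix}$ for some $x_{\mathrm{lb}} \in \hs$. This $x_{\mathrm{lb}}$ will be the candidate vector to be certified feasible.

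Next I would translate the cut value back to an objective value for the penalised function $h$. Using $C = \Diag(Qe) - Q$ and the identity $\bar{x}^\top \Diag(Qe)\bar{x} = e^\top Q e$ for any $\bar{x}\in\hsp$ (because $\bar{x}_i^2 = 1$), I would obtain
\begin{equation*}
\bar{x}_{\mathrm{lb}}^\top Q \bar{x}_{\mathrm{lb}} = e^\top Q e - \bar{x}_{\mathrm{lb}}^\top C \bar{x}_{\mathrm{lb}} = e^\top Q e - z_{\mathrm{lb}}.
\end{equation*}
Since the left-hand side equals $h(x_{\mathrm{lb}})$ by the derivation preceding~\eqref{prob:MC}, the hypothesis $z_{\mathrm{lb}} \geq e^\top Qe - \rho$ gives $h(x_{\mathrm{lb}}) \leq \rho$.

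Finally, I would invoke assumption~\ref{thm:a2} of Theorem~\ref{thm:PARAM}, which was translated in inequality~\eqref{ineq:HRHO} into the statement that $h(x) > \rho$ for every $x \in \Delta^c$. Contrapositively, $h(x_{\mathrm{lb}}) \leq \rho$ forces $x_{\mathrm{lb}} \in \Delta$, so $x_{\mathrm{lb}}$ is the promised feasible vector of Problem~\eqref{prob:BQP}. There is no real obstacle here; the only subtlety is the sign-flip normalisation to enforce $(\bar{x}_{\mathrm{lb}})_0 = 1$, which is harmless because both the cut value $\bar{x}^\top C \bar{x}$ and the quadratic form $\bar{x}^\top Q \bar{x}$ are invariant under $\bar{x} \mapsto -\bar{x}$.
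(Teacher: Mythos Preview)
Your proof is correct and follows essentially the same approach as the paper: normalise the cut vector so that its first coordinate is~$1$, use the identity $\bar{x}^\top Q\bar{x} = e^\top Qe - \bar{x}^\top C\bar{x}$ to convert the cut-value hypothesis into $h(x_{\mathrm{lb}})\le\rho$, and then appeal to Theorem~\ref{thm:PARAM} (via~\eqref{ineq:HRHO}) to conclude $x_{\mathrm{lb}}\in\Delta$. The paper's version is slightly more terse in citing the transformation from Section~\ref{sec:EXPA} rather than rederiving the $C=\Diag(Qe)-Q$ identity, but the argument is the same.
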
 
\begin{proof}
	Let $\bar{x}_{\mathrm{lb}}$ be the vector associated with $z_{\mathrm{lb}}$,
	i.e., $z_{\mathrm{lb}} = \bar{x}_{\mathrm{lb}}^\top C \bar{x}_{\mathrm{lb}}$.
	Since $\bar{x}_{\mathrm{lb}} \in \hsp$ and $\bar{x}_{\mathrm{lb}}^\top C \bar{x}_{\mathrm{lb}} = (\unaryminus \bar{x}_{\mathrm{lb}})^\top C (\unaryminus \bar{x}_{\mathrm{lb}})$,
	we can fix the first component of $\bar{x}_{\mathrm{lb}}$ to 1, i.e., $\bar{x}_{\mathrm{lb}} = \begin{bmatrix} 1 \ x_{\mathrm{lb}} \end{bmatrix}^\top$.
	Since $\bar{x}_{\mathrm{lb}}$ is a cut, by the transformations presented in Section~\ref{sec:EXPA} it follows that 
	$h(x_{\mathrm{lb}}) = \bar{x}_{\mathrm{lb}}^\top Q \bar{x}_{\mathrm{lb}} = e^\top Q e - \bar{x}_{\mathrm{lb}}^\top C \bar{x}_{\mathrm{lb}}$.  
	Thus  $z_{\mathrm{lb}} \geq e^\top Qe - \rho$ implies $h(x_{\mathrm{lb}}) \le \rho$.
	By Theorem~\ref{thm:PARAM} it follows that $x_{\mathrm{lb}}$ is a feasible vector for Problem~\eqref{prob:BQP}.   	
\end{proof}

	\subsection{Known Feasible Solutions}\label{sec:FEAS}
        In case a feasible solution of \eqref{prob:BQP} is known, e.g., 
        if the assumption of Lemma~\ref{lem:feasSol} holds, we can use
        this information to update the penalty \rew{parameter}. 
        Obviously, detecting infeasiblity is not needed anymore,
        hence we omit the threshold parameter $\rho$.
        Given $x'\in \Delta$ and $\ell \le \ell^*$, we define the penalty parameter $\sigma'$ as follows.
        \[ \sigma' = f(x') - \ell + \epsilon \]
        Note that $f(x') \le u^*$ and therefore $\sigma'$ is smaller than the penalty parameter
        defined before in Section~\ref{sec:PARAM}.
	\begin{theorem}\label{thm:PARAMFeas}
		Consider Problem~\eqref{prob:BQP} and Problem~\eqref{prob:UBQP} with optimal values $f^*$ and $h^*$, respectively. 
                Furthermore, assume that we have a feasible solution $x' \in \Delta$ and we define the penalty parameter $\sigma' = f(x') - \ell + \epsilon$ where $\ell \le \ell^*$.
                Then $h^* = f^*$. 
	\end{theorem}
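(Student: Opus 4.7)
The plan is to mimic the structure of the proof of Lemma~\ref{lem:PARAMbest}, but exploit the feasible solution $x'$ instead of an a priori upper bound $u^*$ on the objective values achieved on $\Delta$. Concretely, I split $\hs = \Delta \cup \Delta^c$ and evaluate $h$ on each piece, then compare.

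First I would observe that on $\Delta$ the quadratic penalty vanishes, so $h(x) = f(x)$ for every $x \in \Delta$; in particular $h(x') = f(x')$ and $\min\{h(x) \mid x \in \Delta\} = f^*$. Since $x' \in \Delta$, we already get $h^* \le f(x')$, which shows the unconstrained minimum is finite and bounded above by $f(x')$.

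Next I would handle $\Delta^c$. By Remark~\ref{rem:axminusb}, the vector $Ax - b$ is integral for every $x \in \hs$, so $\lVert Ax-b\rVert^2$ is a positive integer whenever $x \in \Delta^c$, i.e., $\lVert Ax-b\rVert^2 \ge 1$. Combining this with $f(x) \ge \ell^* \ge \ell$ for $x \in \Delta^c$ and the definition of $\sigma'$, I get
\begin{equation*}
h(x) \;=\; f(x) + \sigma'\,\lVert Ax-b\rVert^2 \;\ge\; \ell + \sigma' \;=\; f(x') + \epsilon \;>\; f(x') \;\ge\; f^*
\qquad\text{for all } x \in \Delta^c.
\end{equation*}

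Putting the two cases together, every minimizer of $h$ over $\hs$ must lie in $\Delta$ (otherwise its value would strictly exceed $f(x')$, contradicting $h^* \le f(x')$), and on $\Delta$ the function $h$ coincides with $f$, so $h^* = f^*$. There is no real obstacle here beyond the integrality observation that forces $\lVert Ax-b\rVert^2 \ge 1$ on $\Delta^c$; the argument is strictly analogous to Lemma~\ref{lem:PARAMbest}, with the role of the upper bound $u^*$ replaced by the known feasible value $f(x')$, which is always at most $u^*$ and hence yields a genuinely smaller penalty parameter.
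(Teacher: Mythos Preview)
Your proof is correct and follows essentially the same approach as the paper: both split $\hs$ into $\Delta$ and $\Delta^c$, use $h(x')=f(x')$ to bound $h^*$ from above, and then show $h(x)\ge \ell+\sigma'=f(x')+\epsilon$ on $\Delta^c$ via the integrality observation from Remark~\ref{rem:axminusb}. The only cosmetic difference is that the paper phrases this as a proof by contradiction (assuming $h^*\neq f^*$ and locating the minimizer in $\Delta^c$), whereas you argue directly that every minimizer must lie in $\Delta$; the underlying estimate is identical.
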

        \begin{proof} 
        	Suppose $h^* \neq f^*$. 
        	Let $\tilde{x}$ be the vector minimizing $h(x)$ over the set $\hs$, i.e., $h^* = h(\tilde{x})$.
            For any $x \in \Delta$ the equality $f(x) = h(x)$ holds, 
            hence we have $\tilde{x} \in \Delta^c$.
        	By definition of $h(x)$, $\sigma'$ and  $\ell$ it follows 
        	$h(\tilde{x}) = f(\tilde{x}) + \sigma' \lVert A\tilde{x}-b \rVert^2 \ge \ell + \sigma' = \ell + f(x') - \ell + \epsilon > f(x')$. 
        	Thus $h(x') < h(\tilde{x}) = h^*$, hence a contradiction.
        \end{proof}

    	\subsection{Least Violated Solution}
        In case of an infeasible instance, it is possible to detect
        the point with the least violation by relaxing the condition
        on the parameter $u$. Let $(\ell,u)$ be any pair of \rew{values} such
        that $\ell \leq \ell^*$ and $u \geq \max \{f(x) \mid x \in \hs \}$.  Given $\sigma = u-\ell+ \epsilon$, let 
        $\tilde{x}$ be the minimizer of $h(x)$. 
        Then $\tilde{x} \in \argmin\{ \|Ax - b\|\mid x \in \hs\} $, i.e., the point with the least violation.
        This information can be helpful in case one is interested in some measure of infeasibility. 
        
        \begin{lemma}
          Let $x_1$ and $x_2$ be any two vectors in $\hs$. 
          Furthermore, let $h(x)$ be defined using $\sigma = u-\ell+\epsilon$ where
          $\ell \leq \ell^*$ and $u \geq \max \{f(x) \mid x \in \hs \}$.
          Then, if $h(x_1) \leq h(x_2)$, 
          it follows $\lVert Ax_1 - b \rVert \leq \lVert Ax_2 - b \rVert$.
        \end{lemma}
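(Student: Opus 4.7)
The plan is to argue the contrapositive: assuming $\lVert Ax_1 - b \rVert > \lVert Ax_2 - b \rVert$, I will show $h(x_1) > h(x_2)$. The principal lever is the integrality fact from Remark~\ref{rem:axminusb}, which says that for any $x \in \hs$ the vector $Ax-b$ has integer entries. Hence $\lVert Ax_i - b \rVert^2 \in \Z$ is a nonnegative integer, and a strict inequality $\lVert Ax_1 - b \rVert > \lVert Ax_2 - b \rVert$ automatically upgrades to the integer gap $\lVert Ax_1 - b\rVert^2 - \lVert Ax_2 - b\rVert^2 \geq 1$.

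First I would dispose of the trivial case $x_1 \in \Delta$: there $\lVert Ax_1 - b\rVert = 0$ is automatically at most $\lVert Ax_2 - b\rVert$, so the conclusion holds regardless of the hypothesis on $h$. So I may assume $x_1 \in \Delta^c$. Under this assumption, by the definition of $\ell^*$ in~\eqref{eqn:lstar}, $f(x_1) \geq \ell^* \geq \ell$, while $f(x_2) \leq u$ because the relaxed hypothesis on $u$ turns it into an upper bound on $f$ over the whole cube $\hs$. Subtracting gives $f(x_1) - f(x_2) \geq \ell - u$.

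Combining these bounds with $\sigma = u - \ell + \epsilon$, and assuming for contradiction that $\lVert Ax_1-b\rVert > \lVert Ax_2-b\rVert$, I compute
\[
h(x_1) - h(x_2) = \bigl(f(x_1) - f(x_2)\bigr) + \sigma \bigl(\lVert Ax_1 - b\rVert^2 - \lVert Ax_2 - b\rVert^2 \bigr) \geq (\ell - u) + \sigma = \epsilon > 0,
\]
which contradicts $h(x_1) \leq h(x_2)$ and completes the argument.

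I expect the proof to be essentially routine; the only subtlety is the case split on $x_1$, which is forced by the fact that $\ell$ is guaranteed to be a lower bound on $f$ only over $\Delta^c$ and not over all of $\hs$. Once the easy case $x_1 \in \Delta$ is peeled off, the integrality argument from Remark~\ref{rem:axminusb} combined with the specific value of $\sigma$ handles everything automatically, mirroring the structure used in the proofs of Theorem~\ref{thm:PARAM} and Lemma~\ref{lem:PARAMbest}.
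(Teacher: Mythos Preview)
Your proof is correct and uses the same core ingredients as the paper: the integrality of $\lVert Ax-b\rVert^2$ from Remark~\ref{rem:axminusb}, the bounds $\ell \le f(x_1)$ and $f(x_2) \le u$, and the specific choice $\sigma = u-\ell+\epsilon$. The organization differs slightly. The paper argues directly and splits into three cases ($x_1 \in \Delta$; $x_2 \in \Delta$, handled via Theorem~\ref{thm:PARAM}; and both in $\Delta^c$), whereas your contrapositive needs only to peel off the vacuous case $x_1 \in \Delta$ before running the main estimate. Your version is a bit more economical: because $u$ bounds $f$ over the entire hypercube, the inequality $f(x_2) \le u$ holds regardless of whether $x_2 \in \Delta$ or $x_2 \in \Delta^c$, so the paper's separate treatment of $x_2 \in \Delta$ (and its appeal to Theorem~\ref{thm:PARAM}) is not actually needed. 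One minor presentational remark: your handling of the case $x_1 \in \Delta$ is phrased as a direct argument inside what is otherwise a contrapositive proof; it would read more cleanly to note that under the contrapositive hypothesis $\lVert Ax_1-b\rVert > \lVert Ax_2-b\rVert \ge 0$ the case $x_1 \in \Delta$ is simply impossible.
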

    \begin{proof}
    	The set $\hs$ is partitioned into $\Delta$ and $\Delta^c$. 
        Let $x_1$ and $x_2$ be any two vectors in $\hs$ with $h(x_1) \leq h(x_2)$.
    	If $x_1 \in \Delta$, then $0 = \lVert Ax_1 - b \rVert \leq \lVert Ax_2 - b \rVert$.
    	If $x_2 \in \Delta$, by Theorem~\ref{thm:PARAM} we have $h(x_1) \leq h(x_2) \leq \rho$ 
    	and therefore $x_1$ is in $\Delta$ as well. 
    	Thus $\lVert Ax_1 - b \rVert = 0 = \lVert Ax_2 - b \rVert$.  
    	Hence, we study the remaining (and only interesting) case, namely $x_1, x_2 \in \Delta^c$.
    	By the definition of $u$ and $\ell$ we have
    	\[ \ell + \sigma \lVert Ax_1 - b \rVert^2 \leq f(x_1) + \sigma \lVert Ax_1 - b \rVert^2 
    	\leq f(x_2) + \sigma\lVert Ax_2 - b \rVert^2 \leq u + \sigma \lVert Ax_2 - b \rVert^2.  \] 
    	Hence it follows $\sigma (\lVert Ax_1 - b \rVert^2 - \lVert Ax_2 - b \rVert^2) \leq u - \ell < \sigma$.
    	Dividing the inequality by $\sigma$ and rearranging the terms we have $\lVert Ax_1 - b \rVert^2 < \lVert Ax_2 - b \rVert^2 + 1$.
    	By Remark~\ref{rem:axminusb} we know that $\lVert Ax - b \rVert^2 \in \Z^+$ for any vector $x \in \hs$, thus $\lVert Ax_1 - b \rVert^2 \leq \lVert Ax_2 - b \rVert^2$.
    \end{proof}

	\section{Experimental Results}\label{sec:RES}

        We implemented the bound computations and the computations of the transformed problem in Matlab. As max-cut solver we use BiqMac, as described in Section~\ref{sec:biqmac}. 
        All experiments were done on an Intel Xeon W-2195~CPU @~2.30GHz 
        and 512~GB RAM running under Linux. 
	
        Several problems can be stated as a BQP with equality constraints. 
        In this section we present results for 
        randomly generated instances,
        for the max $k$-cluster problem and for the quadratic boolean cardinality 
        problem.
        
\subsection{Description of the Instances}\label{sec:INST}
\subsubsection{\rew{Randomly Generated Instances}}
    In order to test the efficiency of our algorithm we created 
    two families of \rew{randomly generated instances, denoted RGI,} using Matlab. 
    Since for binary vectors $y$ the equality $y = y^2$ holds, 
    it is possible to exchange $\hat{c}$ and $\diag(\hat{F})$, 
    hence we assume $ \hat{c} = \mathbf{0}$.
    Given a scalar value $b_v$ we define $\hat{b} = [b_v, \dots , b_v ]^\top$.
    For the choice of $\hat{A}_{ij}$ we pick random integer numbers in the interval $[\hat{A}_l,\hat{A}_u]$. 
    Similarly $\hat{F}_{ij}$ is randomly chosen 
    in $\Z \cap [\hat{F}_l , \hat{F}_u]$.
    We assume $\hat{F}$ to be 	symmetric.

    In the first family we choose
    $[\hat{A}_l, \hat{A}_u]$ as $[\unaryminus 1,1]$, $[\unaryminus 3,3]$ and $[\unaryminus 7,7]$. 
    Similarly, we choose
    $[\hat{F}_l, \hat{F}_u]$ as $[\unaryminus 1,1]$, $[\unaryminus 3,3]$ and $[\unaryminus 7,7]$.
    Moreover, we set $b_v = 0$, hence there is always a feasible vector,
    namely $y = \mathbf{0}$.

    In the second family we choose the elements in $\hat{A}$ to be
    from the intervals $[0,1]$ and $[0,3]$ and
    we choose the scalar $b_v \in \{10, 15, 20\}$.
    For the choice of $\hat{F}$ we use different intervals, 
    namely $[0,5]$, $[\unaryminus 5,5]$, $[0,10]$ and $[\unaryminus 10,10]$.

    For every combination of $\hat{A}, \hat{F}$ and $b_v$, 
    \rew{we form two sets of instances with size $n\in \{80,100\}$}.
    For each of these sets we create 15~instances having one to
    15~constraints.
    
    \rew{In total this gives 270~instances in the first family and
    720~instances in the second family.}
    All the \rew{randomly generated instances} can be downloaded from~\cite{math.aau}.

    \subsubsection{$k$-Cluster Problem and Cardinality Boolean Quadratic Problem}\label{sec:kcluster}
The max $k$-cluster problem, sometimes called densest $k$-subgraph problem asks, given a graph $G$, to find the induced subgraph on $k$ vertices with the largest number of induced edges, i.e.,
\[ \max \left\{ \frac{1}{2} y^\top A  y \mid y \in \{0,1\}^n, \ y^\top e = k \right\} \]
where $A \in \R^{n \times n}$ is the adjacency matrix of the graph and $k$ is an integer number in $[1,n]$.

We use the max $k$-cluster instances from~\cite{lambert} where 
$n \in \{120,140,160\}$, $k \in \{n/4,n/2,3n/4\}$, and densities $d \in \{0.25,0.50,0.75\}$.

A slightly more general problem is the cardinality boolean quadratic problem (CBQP). 
The CBQP is a minimization problem similar to the $k$-cluster problem, with the addition of a linear 
term in the objective function, i.e.,
\[ \min \left\{ y^\top Q  y + q^\top y \mid y \in \{0,1\}^n, \ y^\top e = k \right\}. \]
A collection of CBQP instances can be found at~\cite{cbqp}.
These instances have different sizes $n \in \{50, 75, 100, 200, 300\}$ and densities $d \in 
\{0.10,0.50,0.75,1.00 \}$. Following the study of Grossmann and Lima~\cite{LiGro:17} we set $k=n/5$ and 
$k=4n/5$.

\subsection{Comparing the Penalty Parameters}
In this section we study different penalty parameters obtained 
from the bounds that we introduced in the previous sections. 
First we compare the values of different penalty parameters and their
computational times. 
We compare the following penalty parameters:
\begin{align*}
\sigma_{\Las} &= 2 \max\{\lvert \hat{l} \rvert,\lvert \hat{u} \rvert \}
               + 1\\
\sigma_{\CLI}&= \tilde{u} - \tilde{\ell} + \epsilon\\
\sigma_{\GW} &= u_\Delta - \tilde{\ell} + \epsilon
\end{align*}
as described in Sections~\ref{sec:LAS} and~\ref{sec:IMP}.
In Table~\ref{table:rho} we list in columns~2 and~3 the average of the
ratios $\frac{\sigma_{\CLI}}{\sigma_{\Las}}$ and
$\frac{\sigma_{\GW}}{\sigma_{\Las}}$ in percent.
Columns~4 to~6 give the average time in seconds to compute the relevant $\sigma$.

	\begin{center}
		\begin{table}[ht]
                  \begin{center}
			\begin{tabular}{|c!{\reww\vrule width 0.1pt}!{\reww\vrule width 0.1pt}c|c!{\reww\vrule width 0.1pt}!{\reww\vrule width 0.1pt}c|c|c|}
			\hline
			Set of instances & 
			avg $\frac{\sigma_{\CLI}}{\sigma_{\Las}} \rew{(\%)}$ & 
			avg $\frac{\sigma_{\GW}}{\sigma_{\Las}} \rew{(\%)}$ & 
			avg $t_{\Las}(s)$ & avg $t_{\CLI}$(s) & avg $t_{\GW}$(s) \\
			\cline{1-6}
			\rew{RGI with} $n = 80$ & 41.94 & 15.89 & 0.15 & 126.53 & 31.18 \\
			\cline{1-6}
			\rew{RGI with} $n = 100$ & 25.03 & 13.20 & 0.20 & 142.36 & 45.08 \\
			\cline{1-6}
			$k$-cluster & 49.89 & 37.48 & 0.34 & 16.93 & 7.02 \\	
			\cline{1-6}	
			CBQP with $Q(\ge 0)$ & 50.17 & 17.92 & 0.51 & 28.71 & 63.48 \\	
			\cline{1-6}	
			CBQP with $Q(\in \R)$ & 89.89 & 76.00 & 0.54 & 31.91 & 52.88 \\	
			\hline
			\end{tabular}			
		\caption{Comparison of different penalty parameters and computational times}	
		\label{table:rho}
                \end{center}
		\end{table}
	\end{center}

We already proved in Proposition~\ref{prop:PAR+} that $\sigma_{\Las} \geq \max\{ \sigma_{\GW},
\sigma_{\CLI} \}$,
but there is no relation between $\sigma_{\CLI}$ and $\sigma_{\GW}$. 
From Table~\ref{table:rho} we observe that, for all our instances, on average the latter is always smaller than 
the former one.

As expected, the computational time for computing $\sigma_{\CLI}$ and
$\sigma_{\GW}$ is clearly larger than the one for computing
$\sigma_{\Las}$. However, compared to the time for solving the max-cut
problem, the time for computing $\sigma$ is negligible.  
Comparing the times for computing $\sigma_\CLI$ and $\sigma_\GW$,
there is no clear winner. 

We now show the impact of using a smaller penalty parameter on the overall 
performance of our algorithm. 
To do so, we study the running times of \expedis\ with
different choices for the penalty parameter.
We only compare the two penalty parameters
$\sigma_{\Las}$ and $\sigma_{\GW}$ since $\sigma_{\Las} \ge
\sigma_{\CLI} \ge \sigma_{\GW}$.

Furthermore, we experiment with the effect of updating the penalty
parameter $\sigma_{\GW}$ when a feasible solution is found at the root
node, as described in
Section~\ref{sec:FEAS}.

In Figures~\ref{fig:n80} and~\ref{fig:n100}
we show the results for the \rew{randomly generated instances}
 of size $n=80$ and
$n=100$\rew{, respectively}; we set a time limit of 1.5~hours.

We see that decreasing the value of the penalty
parameter improves the average computational time of the algorithm.
Also, updating the penalty parameter at the root node 
improves the overall running time. 
We also \rew{observe that with} an increasing number of variables, the
effect of a smaller penalty parameter is even more significant.

\begin{figure}[ht]
	\begin{tikzpicture}
	\begin{axis}[legend style={at={(-0.3,0.6)},anchor=north east},
	xtick={0,0.5,1,1.5},
	ytick={0,20,40,60,80,100},
	enlarge x limits=false,
	enlarge y limits={upper},
	xlabel=time (h),
	ylabel= \% of instances solved]
	\addplot+[brown, mark=10-pointed star] table [x=t, y=open]{n80.dat};
	\addlegendentry{\expedis\ with $\sigma_{\Las}$}
	\addplot+[red] table [x=t, y=npen]{n80.dat};
	\addlegendentry{\expedis\ with $\sigma_{\GW}$}
	\addplot+[blue, mark options={fill=blue}] table [x=t, y=heur]{n80.dat};
	\addlegendentry{\expedis\ \rew{with~$\sigma$~update}}	    
	\end{axis}
	\end{tikzpicture}
	\caption{Performance profile \rew{of different versions of~\expedis\ on}
		all the \rew{495~randomly generated instances} with $n=80$}
	\label{fig:n80}
\end{figure}
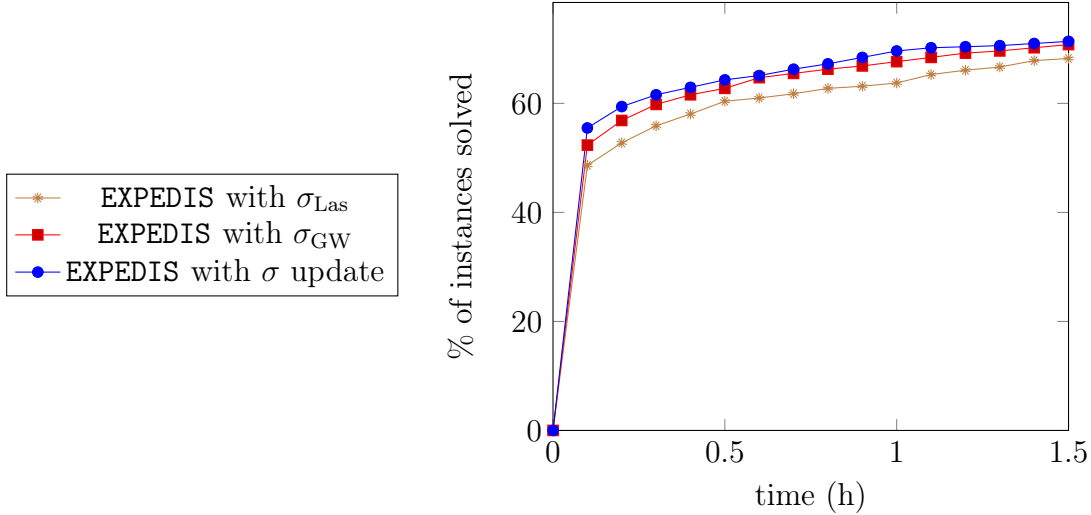

\begin{figure}[ht] 
	\begin{tikzpicture}
	\begin{axis}[legend style={at={(-0.3,0.6)},anchor=north east},
	xtick={0,0.5,1,1.5},	
	ytick={0,20,40,60,80,100},
	enlarge x limits=false,
	enlarge y limits={upper},
	xlabel=time (h),
	ylabel= \% of instances solved]
	\addplot+[brown, mark=10-pointed star] table [x=t, y=open]{n100.dat};
	\addlegendentry{\expedis\ with $\sigma_{\Las}$}
	\addplot+[red] table [x=t, y=npen]{n100.dat};
	\addlegendentry{\expedis\ with $\sigma_{\GW}$}
	\addplot+[blue, mark options={fill=blue}] table [x=t, y=heur]{n100.dat};
	\addlegendentry{\expedis\ \rew{with~$\sigma$~update}}
	\end{axis}
	\end{tikzpicture}
	\caption{Performance profile \rew{of different versions of~\expedis\ on} 
		all the \rew{495~randomly generated instances} with $n=100$}
	\label{fig:n100}
\end{figure}

        \subsection{Settings of $\expedis$}\label{sec:setExp}
        The experiments in the previous section suggest to choose in our algorithm 
        $\sigma_{\GW}$, with a possible update \rew{if some feasible solution is known}, 
        as penalty parameter.
        \rew{In order to possibly avoid the preprocessing calculation
          of the parameters, we enhance the algorithm as follows.
        	We first use a trivial penalty parameter to set up the max-cut problem as
        	given in~\eqref{prob:MC}. Then we run the Goemans-Williamson heuristic 
        	and we locally improve the resulting cut vector by checking all possible 
        	moves of a single vertex to the opposite partition block.
        Given as input a feasible problem, 
        this heuristic often finds a cut whose vector 
          associated  with \ref{prob:BQP} is feasible.}

        \rew{Let $x'$ be the vector associated to the cut found by 
        the rounding heuristic. 
        If $x' \notin \Delta$, we set the threshold and the penalty parameter 
        as described in Section~\ref{sec:PARAM}, i.e., 
        \[
	        \rho_{\GW} = u_\Delta \; \text{ and } \; 
	        \sigma_{\GW} = u_\Delta - \tilde{\ell} + \epsilon, 
	    \] 
	    where $\tilde{\ell}$ and $u_\Delta$ are the bounds 
	    presented in Section~\ref{sec:IMP}.
	    If $x' \in \Delta$, i.e., $x'$ is a feasible vector for Problem~\eqref{prob:BQP}, 
        we redefine the penalty parameter as described in Section~\ref{sec:FEAS}, 
        i.e.,
        \[\sigma' = f(x') - \tilde{\ell} + \epsilon.\]}
        We outline our settings in Algorithm~\ref{alg:finalExpa}. 
        
        \begin{algorithm}
        	\caption{\small Scheme of our algorithm} 
        	\label{alg:finalExpa} 
        	\TitleOfAlgo{Refined \expedis}
        	\KwData{$\hat{F} \in \R^{n \times n}, \hat{c} \in \R^n, \hat{A} \in \Z^{m \times n}, \hat{b} \in \Z^m$ defining problem $\min \big\{ y^\top \hat{F}y + \hat{c}^\top y \mid \hat{A}y = \hat{b}, \ y \in \{0,1\}^n \big\}$}
        	\KwResult{optimal solution or certificate of infeasibilty}
        	\medskip
        	transform to problem $\min \big\{ x^\top F x + c^\top x + \alpha \mid Ax = b, \ x \in \hs \big\}$\;
        	\rew{choose a trivial penalty parameter and set up the {\bf max-cut} problem as given in~\eqref{prob:MC}\; 
        	run the rounding heuristic and extract the cut $x'$\;
        	\eIf{$x' \notin \Delta $}{
        		compute the {\bf threshold parameter} $\rho_{\GW} = u_\Delta$\;
        		compute the {\bf penalty parameter} $\sigma_{\GW}=u_\Delta-\tilde{\ell}+\epsilon$\nllabel{sigma2}\;
        		set up and solve the {\bf max-cut} problem giving optimal value $h^*$\;
        		\If{$z_{\mathrm{ub}} < e^\top Q e - \rho_{\GW}$}{
        			terminate the algorithm: problem infeasible\;}
        		\If{$h^* > u_\Delta $}{
        			terminate the algorithm: problem infeasible\;}
        		{}}
        		{update the penalty parameter: $\sigma'=f(x')-\tilde{\ell}+\epsilon$\;
        		set up and solve the {\bf max-cut} problem giving optimal value $h^*$\;}
	        transform the optimal cut to the optimal solution of the $0/1$ problem\;}
        \end{algorithm}
        

	\subsection{Comparison to \rew{other solvers}}
	In this section we compare the performance of our algorithm 
	with other generic solvers.
	\rewt{We tested the instances using BiqCrunch~\cite{KrMaRo17}, 
	COUENNE~\cite{COUENNE}, CPLEX~\cite{CPLEX}, GUROBI~\cite{GUROBI}, 
	SCIP~\cite{SCIP} and SMIQP~\cite{BiElLaWi:17}.
	In all these solvers 
	we input Problem~\eqref{prob:BQP01} and we keep the default settings.}
	We use the data sets described in Section~\ref{sec:INST} above.

	In Figure~\ref{fig:ran80+100} we present the performance
    profile of \rew{the different solvers on} all the \rew{randomly
    generated instances.}  
	Clearly, for these instances~\expedis\ outperforms \rew{all the other solvers}. 
	\rewt{Within the time limit~\expedis\ and GUROBI solve almost 70\% and 60\% of the instances, respectively, BiqCrunch solves slightly less than 50\% of them, 
	while the other solvers less than 30\% of them.}
	
	\begin{figure}[ht]
		\begin{tikzpicture}
		\begin{axis}[legend style={at={(-0.3,0.6)},anchor=north east},
		xtick={0,0.5,1,1.5},
		ytick={0,20,40,60,80,100},
		enlarge x limits=false,
		enlarge y limits={upper},
		xlabel=time (h),
		ylabel= \% of instances solved]
	\addplot+[blue, mark options={fill=blue}] table [x=TIME, y=HEUR]{review2.dat};
	\addlegendentry{\expedis}
	\addplot+[green, mark=diamond*, mark options={fill=green}] table [x=TIME, y=CP]{review2.dat};
	\addlegendentry{CPLEX}
	\addplot+[cyan, mark=asterisk, mark options={fill=cyan}] table [x=TIME, y=SC]{review2.dat};
	\addlegendentry{SCIP}
	\addplot+[red, mark=10-pointed star, mark options={fill=red}] table [x=TIME, y=GU]{review2.dat};
	\addlegendentry{GUROBI}
	\addplot+[black, mark=triangle*, mark options={fill=black}] table [x=TIME, y=COU]{review2.dat};
	\addlegendentry{COUENNE}
	\addplot+[yellow, mark options={fill=yellow}] table [x=TIME, y=SMIQP]{review2.dat};
	\addlegendentry{\rewt{SMIQP}}
	\addplot+[gray, mark=pentagon*, mark options={fill=gray}] table [x=TIME, y=BC]{review2.dat};
	\addlegendentry{\rewt{BIQCRUNCH}}
		\end{axis}
		\end{tikzpicture}
		\caption{Performance profile \rew{of the different solvers} on all 
		the \rew{randomly generated instances}}
		\label{fig:ran80+100}
	\end{figure}
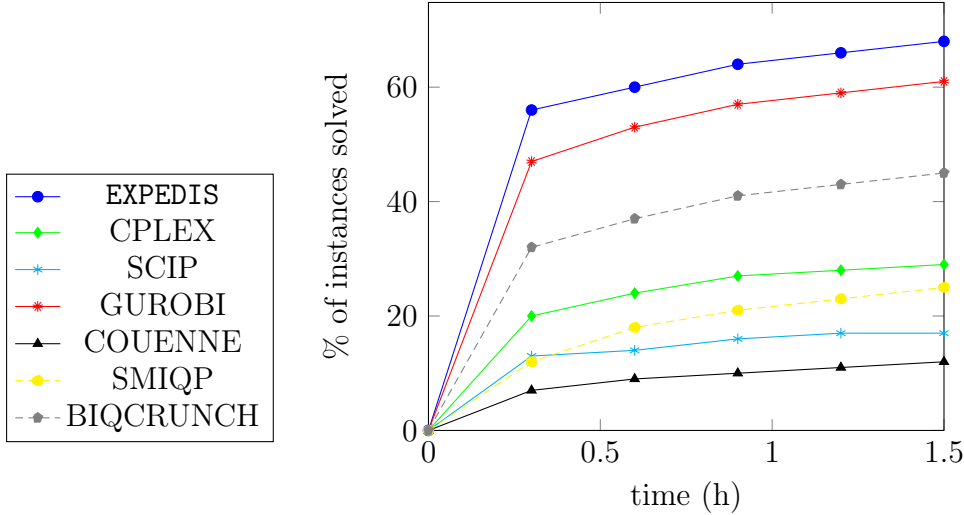

	\rewt{Next, we compare the performance of the different solvers for structured instances,
	namely the max $k$-cluster instances.}
	
	\rewt{From the literature, two prominent solvers for the max $k$-cluster
	problem are BiqCrunch and SMIQP. 
	The former has a tailored version for the max $k$-cluster problem,
	which reinforces the model with additional product constraints.
	The specialized version of BiqCrunch outperforms~\expedis~on the max $k$-cluster
	instances, while the comparison with SMIQP and the general version 
	of BiqCrunch is clearly dominated by~\expedis.}

    \rewt{We report only the running times of \expedis\; on the 135~instances of
    the max $k$-cluster problem as described in Section~\ref{sec:kcluster}
    in Table~\ref{tab:kCLUSTER} below, since none of the other solvers 
      was able to solve any of these instances within the time limit of 3~hours. }
    
    For every combination $(n,k)$ there are 15 instances.
    In the third column of Table~\ref{tab:kCLUSTER}, we present the average running 
    time over the instances solved within the time limit of 3~hours. 
    Note that all the instances with $k=3n/4$ are solved within 
    3~hours and that the problem gets more complicated for small $k$.


   \begin{table}[ht]
	   \begin{center}
         \begin{tabular}{|c|lr|}
        	\hline
        	$(n,k)$ & instances & avg time (s) \\
        	\hline
        	$(120,30)$ & 15 (14) & 2394\\
        	\hline  
        	$(120,60)$ & 15 (15) & 1748\\
        	\hline  
        	$(120,90)$ & 15 (15) &  198 \\
        	\hline  
        	$(140,35)$ & 15 (5)  & 5108\\
        	\hline  
        	$(140,70)$ & 15 (12) & 2922\\
        	\hline  
        	$(140,105)$& 15 (15) & 509 \\
        	\hline  
        	$(160,40)$ & 15 (2)  & 7050\\
        	\hline  
        	$(160,80)$ & 15 (6)  & 5704\\
        	\hline  
        	$(160,120)$& 15 (15) & 1145\\ 
        	\hline
        \end{tabular}			
        \caption{Running times for solving the $k$-cluster instances. For each $(n,k)$ we consider 15~instances, in brackets we give the number of instances solved within the \rew{time limit} of 3~hours.}	
       	\label{tab:kCLUSTER}
      \end{center}
       \end{table}

	In Figure~\ref{fig:cbqp} we compare the running times for solving the instances 
	of the cardinality boolean quadratic problem, for which we set a time limit 1.5~hours.
	\rewt{Since CBQP is essentially the same problem as the max $k$-cluster, 
	i.e., a binary quadratic problem with a cardinality constraints,
	and from the considerations on the max $k$-cluster, 
	we do not proceed further in solving the CBQP instances by using SMIQP and BiqCrunch.
	We do not present the results for $n=50$ because all the solvers 
	solve all the small size instances.} 
	Moreover, for the combination of parameters
	$(n,k)\in \{ (200,n/5), (300,n/5), (300,4n/5)\}$ none of
	the \rew{solvers manages to find the optimum of any instance within the	time limit}. 
	Hence we omit these instances as well.
	We show the results in Figure~\ref{fig:cbqp}. Within the time limit, 
	\expedis\; can solve almost all instances \rew{(95 \%)}, whereas \rew{GUROBI 
	manages to solve slightly less than 70\% and all the other solvers at most half of them.}
	
		\begin{figure}
		\begin{tikzpicture}
		\begin{axis}[legend style={at={(-0.3,0.6)},anchor=north east},
		xtick={0,0.5,1,1.5},
		ytick={0,20,40,60,80,100},
		enlarge x limits=false,
		enlarge y limits=upper,
		xlabel=time (h),
		ylabel= \% of instances solved]
		\addplot+[blue, mark options={fill=blue}] table [x=TIME, y=EX]{cbqpN.dat};
		\addlegendentry{\expedis}
		\addplot+[green, mark=diamond*, mark options={fill=green}] table [x=TIME, y=CP]{cbqpN.dat};
		\addlegendentry{CPLEX}
		\addplot+[cyan, mark=asterisk, mark options={fill=cyan}] table [x=TIME, y=SC]{cbqpN.dat};
		\addlegendentry{SCIP}
		\addplot+[red, mark=10-pointed star, mark options={fill=red}] table [x=TIME, y=GU]{cbqpN.dat};
		\addlegendentry{GUROBI}
		\addplot+[black, mark=triangle*, mark options={fill=black}] table [x=TIME, y=COU]{cbqpN.dat};
		\addlegendentry{COUENNE}
		\end{axis}
		\end{tikzpicture}
		\caption{Performance profile \rew{of the different solvers on~400~CBQP instances.}}
		\label{fig:cbqp}
	\end{figure}
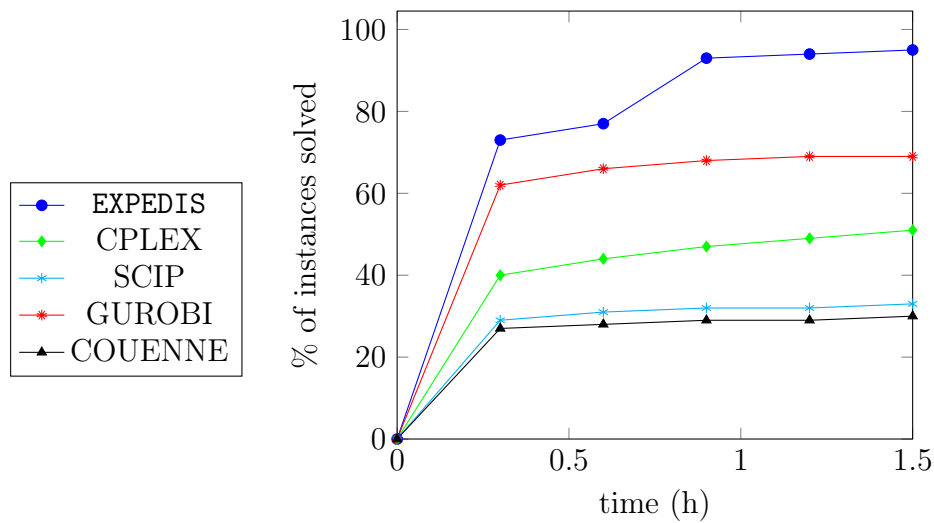

	\section{Conclusions and Future Work}\label{sec:CON}
	In this paper we present \expedis, a new algorithm for solving
        binary quadratic problems with linear equality constraints. 
	\expedis\; transforms the binary quadratic problem into a
        max-cut instance, computes the optimal cut, and then
        provides either the optimal solution of the binary quadratic
        problem or gives a certificate of infeasibility.  

        At the heart of the algorithm is a penalty parameter used for the transformation.
        We investigate conditions on the penalty parameter and present
        different ways to choose it. We also present numerical
        experiments showing the effect of the different choices.

	In order to demonstrate the strength \expedis\; we
        perform numerical experiments on several types of instances. 
        These experiments clearly show the dominance of \expedis\;
        over \rew{CPLEX, COUENNE, GUROBI and SCIP.} 

        Computing the max-cut is done using the solver BiqMac. Hence, advancing 
        BiqMac will also result in a speedup of \expedis. Therefore we
        are currently working on improving BiqMac by adding more
        polyhedral cuts in the bounding procedure.  Another line of
        research is to make a\rew{n extended} parallel version of \expedis\; and run it on
        a high-performance computer. This algorithm will then be
        available via BiqBin \rew{at the web page \url{http://biqbin.eu}}.

	It would be interesting to generalize our algorithm to linear
        inequality constraints as well as to quadratic constraints.
	In particular, we would like to understand whether
        ellipsoidal relaxations~\cite{BuDSPaPi13} can be used to further improve the
        penalty parameter. These topics are currently under investigation.

	\section*{Acknowledgments}\label{sec:ACK}
        This project was supported by the Austrian Science Fund (FWF): I\,3199-N31
        \rew{and by the Slovenian Research Agency (ARRS): N1-0057}.
        Furthermore, this project has received funding from
        the European Union's Horizon~2020 research and innovation programme
        under the Marie Sk\l{}odowska-Curie grant agreement No~764759.
        The authors are grateful to Franz Rendl for helpful
        discussions.  
        Part of this work has been carried out during a
        research stay of the first author at the University of Iowa
        hosted by Sam Burer. 
        The authors would like to thank Sam Burer for discussions and 
        suggestions that improved the results significantly. 
	\rew{The authors also thank two anonymous referees for their valuable comments.}
        
	\bibliographystyle{amsplain}
	\bibliography{expa}

\end{document}